\newcommand{\R}{\mathbb{R}}
\newcommand{\C}{\mathbb{C}}
\newcommand{\B}{\mathbb{B}}
\newcommand{\Z}{\mathbb{Z}}
\renewcommand{\S}{\mathbb{S}}
\renewcommand{\phi}{\varphi}
\renewcommand{\d}{\, d}									               % space before dx in integral
\newcommand{\tn}[1]{\textnormal{#1}}					                   % text normal shortcut
\renewcommand{\l}{\left}                                               % sized brackets
\renewcommand{\r}{\right}								               
\renewcommand{\b}[1]{\l( #1 \r)}						               
\newcommand{\mc}[1]{\mathcal{#1}}						               % mathcal shortcut
\newcommand{\ip}[1]{\l\langle #1 \r\rangle}				               % inner product
\renewcommand{\pd}[2]{\frac{\partial #1}{\partial #2}}                 % partial derivatices
\newcommand{\eq}[1]{\begin{equation} #1 \end{equation}}
\newcommand{\aeq}[1]{\begin{align}\begin{split} #1 \end{split}\end{align}}
\newcommand{\II}{\mathrm{I\!I}}
\theoremstyle{plain}
\newtheorem{thm}[equation]{Theorem}
\newtheorem{lem}[equation]{Lemma}
\newtheorem{prop}[equation]{Proposition}
\newtheorem{cor}[equation]{Corollary}
\theoremstyle{definition}
\newtheorem{exa}[equation]{Example}
\newtheorem{rmk}[equation]{Remark}
\newtheorem{opr}[equation]{Open Problem}
\newtheorem{dia}[equation]{Diagram}
\title{Stability Estimates for  Commutativity Properties of the Dirichlet-to-Neumann Operator}
\author{Romain Speciel}
\date{\today}
\begin{document}
\maketitle

%%%%%%%%%%%%%%%%%%%%
%%% TEXT
%%%%%%%%%%%%%%%%%%%%

\abstract{
The Laplacian $\Delta_{\S^{n-1}}$ on the unit sphere $\S^{n-1}\subset \R^n$ has the property that it can explicitly be expressed in terms of $\Lambda$, the Dirichlet-to-Neumann map  of the unit ball, as $\Delta_{\S^{n-1}}=\Lambda^2+(n-2)\Lambda$. In this paper, we seek to characterize those manifolds for which such an exact relationship holds, and more generally measure the discrepancy of such a relationship holding in terms of geometric data. To this end, we obtain a stability estimate which shows that, for a smoothly bounded domain in $\R^3$, if the commutator $[\Lambda,\Delta_{\S^{n-1}}]$ is small then that domain is itself close to a ball. We then study the case of manifolds conformal to the ball, show that a relationship as above implies a radial metric structure, and discuss stability in this setting. Finally, we provide a modern exposition of Gohberg's lemma, a foundational result in microlocal analysis which we employ as a starting step for our reasoning.
}

\vspace{2em}

\section{Introduction}

Let $\Omega\subset \R^n$ be a bounded domain with smooth $(n-1)$-dimensional boundary, assumed nonempty throughout. The \textit{Dirichlet-to-Neumann map} $\Lambda$ maps a function on the boundary to the normal derivative of its harmonic extension inside $\Omega$. Explicitly, it is defined for $f\in C^\infty(\partial\Omega)$ by
\eq{
	\Lambda f=\partial_\nu u,
}
where $u\in C^\infty(\Omega)$ solves
\eq{
	\begin{cases}
		\Delta u\coloneqq -(\partial_{x_1}^2+\dots+\partial_{x_n}^2)u=0 \quad\tn{in }\Omega,\\
		u|_{\partial\Omega}=f,
	\end{cases}
}
and $\nu$ is the outward pointing normal to $\partial\Omega$. This map extends to a bounded linear operator $\Lambda\colon H^{1/2}(\partial\Omega)\to H^{-1/2}(\partial\Omega)$, where $H^s(M)$ denotes the standard $L^2$ Sobolev spaces on a Riemannian manifold $(M,g)$. It is a central object of study in the context of geometric inverse problems, finds applications in fields ranging from medical imaging to geology, and has been extensively investigated over the last half-century. In particular, this operator is the object of interest in the \textit{Calder\'on problem}, which asks if the interior of a manifold may be determined from the knowledge of its Dirichlet-to-Neumann map (consult \cite{U14} for a broad overview of the field).

When $\Omega=\B^n$ is the unit ball with boundary $\S^{n-1}$, the spherical symmetry allows an explicit computation via decomposition into eigenfunctions. One finds that
\eq{
	\Lambda^2+(n-2)\Lambda =\Delta_{\S^{n-1}},
}
where $\Delta_{\S^{n-1}}$ is the Laplacian on the unit sphere with respect to the standard induced metric (see \cite{CGGS23} for extensive, detailed computations, as well as a overview of the spectral properties of the Dirichlet-to-Neumann map). Furthermore, in the model case of $\R^{n-1}$ viewed as the boundary of $\R^n_+$, the Laplacian on $\R^{n-1}$ is exactly the square of the Dirichlet-to-Neumann map (see \cite{CS07} for details of this construction and an extension to the setting of the fractional Laplacian). Both of these examples display a perhaps surprising relationship between the boundary Laplacian, a local differential operator, and Dirichlet-to-Neumann map, a nonlocal pseudodifferential operator highly sensitive to the interior geometry

These observations motivate the two following questions, which we pose for a general Riemannian manifold $(M,g)$ with boundary: 

\vspace{0.3em}

Question I. When is $\Lambda$ exactly a function of $\Delta_{\partial M}$?

\vspace{0.3em}
Question II. More generally, when do $\Lambda$ and $\Delta_{\partial M}$ commute?

\vspace{0.3em}

\noindent This latter question was originally studied in \cite{GKLP22}, where the authors consider the question of whether the Dirichlet-to-Neumann map commutes with the boundary Laplacian for any Euclidian domains other than the ball (note commutativity is equivalent to simultaneous diagonalizability since both operators are self adjoint). Their results, in conjunction with the work in \cite{S25}, demonstrate the following:

\begin{thm}[\cite{GKLP22, S25}]
\label{thm: commutator means ball}
	Let $\Omega\subset \R^n$, $n\geq 3$, be a bounded domain with nonempty smooth boundary. Then,
	\eq{
		[\Lambda,\Delta_{\partial\Omega}]=0 \quad \iff \quad \tn{$\Omega$ is a ball}.
	}
\end{thm}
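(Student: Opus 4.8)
The plan is to treat the two implications separately; only one of them has real content. If $\Omega$ is a ball, then a translation and a dilation — which conjugate $\Lambda$ and $\Delta_{\partial\Omega}$ compatibly — reduce us to $\Omega=\B^n$, $\partial\Omega=\S^{n-1}$ with its round metric, and the identity $\Delta_{\S^{n-1}}=\Lambda^2+(n-2)\Lambda$ recalled above exhibits $\Delta_{\partial\Omega}$ as a polynomial in $\Lambda$. Since any operator commutes with a polynomial in itself, $[\Lambda,\Delta_{\partial\Omega}]=0$, which settles this (easy) direction.

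For the converse I would exploit that $\Lambda\in\Psi^1(\partial\Omega)$ is a classical first-order pseudodifferential operator whose full symbol is computed by the standard boundary factorization of the Euclidean Laplacian. In boundary normal coordinates $(x',x_n)$ one writes $\Delta = -\partial_{x_n}^2 + (\text{first order in }\partial_{x_n}) + Q(x,D_{x'})$ with $Q$ a second-order elliptic operator in $x'$, and factors it modulo a smoothing operator as a product of two first-order factors, the boundary value of one of which has, up to sign, the same full symbol as $\Lambda$. Solving the resulting Riccati equation order by order gives $\sigma(\Lambda)\sim\lambda_1+\lambda_0+\lambda_{-1}+\cdots$ with principal symbol $\lambda_1=|\xi'|_g$, the length of $\xi'$ in the induced boundary metric $g$, and with $\lambda_0$ an explicit expression built from the second fundamental form $\II$ of $\partial\Omega$ (schematically $\lambda_0 = c_1\,\II(\xi',\xi')/|\xi'|_g^2 + c_2\,\operatorname{tr}_g\II$), the lower-order terms involving covariant derivatives of $\II$ and the ambient curvature, which vanishes in the Euclidean setting.

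Next I would expand the symbol of $[\Lambda,\Delta_{\partial\Omega}]$. Because $\sigma_2(\Delta_{\partial\Omega})=|\xi'|_g^2$ is a function of the same quadratic form as $\lambda_1=|\xi'|_g$, the two principal symbols Poisson-commute, so the a priori order-$2$ top symbol of the commutator vanishes identically and $[\Lambda,\Delta_{\partial\Omega}]$ has order $\le 1$. Its order-$1$ symbol is, up to a constant, $\{\lambda_0,|\xi'|_g^2\}$ together with the (inessential) subprincipal contribution of $\Delta_{\partial\Omega}$, and requiring it to vanish forces $\lambda_0$ to be invariant under the cogeodesic flow of $(\partial\Omega,g)$; combining this with the vanishing of the order-$0$ and lower symbols constrains $\II$ so strongly that $\partial\Omega$ must be totally umbilic (equivalently, a hypersurface with constant principal curvatures). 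This is precisely the step drawing on both \cite{GKLP22}, which carries out the argument when $n=3$, and \cite{S25}, which handles $n\ge 4$. Finally, a compact totally umbilic hypersurface of $\R^n$ with $n\ge 3$ is a round sphere (alternatively, one extracts constant mean curvature and invokes Alexandrov's theorem), so $\Omega$ is a ball.

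I expect the genuine obstacle to be this last extraction of rigid geometry from the symbol calculus, not the symbol bookkeeping itself: the order-$1$ identity yields only flow-invariance of a fibrewise-quadratic function on the cosphere bundle, and turning this into pointwise umbilicity uniformly in the dimension is what requires the finer analysis of \cite{S25}. Note that the hypothesis $n\ge 3$ enters decisively at the endgame, since for $n=2$ every boundary curve is trivially "umbilic" and the umbilicity rigidity fails.
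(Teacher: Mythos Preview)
Your overall architecture is right: the easy direction is immediate from the identity $\Delta_{\S^{n-1}}=\Lambda^2+(n-2)\Lambda$, and for the converse one writes $\Lambda=\sqrt{\Delta_{\partial\Omega}}+B$ with $B\in\Psi^0$, so that $[\Lambda,\Delta_{\partial\Omega}]=[B,\Delta_{\partial\Omega}]$ has order at most $1$, and its principal symbol is the Poisson bracket $\{\sigma_0(B),|\xi|_g^2\}$ with $\sigma_0(B)=\tfrac12\bigl(\II(\xi,\xi)/|\xi|^2-(n-1)H\bigr)$. Where you go astray is in the extraction of geometry from the vanishing of this symbol.

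You frame the condition $\{\sigma_0(B),|\xi|^2\}=0$ as cogeodesic-flow invariance of a fibrewise quadratic, and then say one must combine this with the order-$0$ and lower symbols to force umbilicity, calling this the ``genuine obstacle'' and the content of \cite{S25}. That is not how the argument runs, and the detour through flow invariance and subprincipal symbols is both unnecessary and not what the cited papers do. Working pointwise in normal coordinates at $\tilde x\in\partial\Omega$, the Poisson bracket expands to
\[
\sum_i\xi_i\Bigl(\sum_{j,k}\frac{\xi_j\xi_k}{|\xi|^2}\,\partial_{x_i}\II_{jk}-(n-1)\,\partial_{x_i}H\Bigr)=0,
\]
a homogeneous cubic identity in $\xi$. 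Because the ambient space is flat, the Codazzi equation makes $\partial_{x_i}\II_{jk}$ fully symmetric in $i,j,k$; comparing monomial coefficients then gives directly $\partial_{x_i}\II_{jk}=0$ for all $i,j,k$ (and in particular $\nabla H=0$). This is purely algebraic at the order-$1$ level---no lower-order symbols, no dynamics. In \cite{GKLP22} the authors read off $\nabla H=0$ from this system and finish with Alexandrov's theorem, exactly the alternative you mention parenthetically; the present paper sharpens the same computation to $\nabla\II=0$ (see Proposition~\ref{prop: gohberg to geometric} and Remark following it). Your attribution is also off: \cite{GKLP22} already treats $n\ge 3$, while \cite{S25} is concerned with the exceptional two-dimensional case rather than with $n\ge 4$.
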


\noindent Interestingly, the two dimensional case, studied in \cite{S25}, is exceptional (as is often the case in the study of the Dirichlet-to-Neumann operator) as there exist non-round simply connected domains in the plane with this commutativity property.

In this paper, we study the stability properties associated to Question I and Question II above both for Euclidian domains and for manifolds conformal to the standard unit ball. Section \ref{sec: euclidian} considers the Euclidean case and establishes in this setting a stability estimate corresponding to Theorem \ref{thm: commutator means ball} , as conjectured in \cite{GKLP22}, using microlocal techniques and geometric inequalities. To state our results, we first fix explicit norms for our function spaces. Define for any Riemannian manifold $(M,g)$, possibly with boundary, the spaces $L^2(M)$ and $H^s(M)$, $s>0$, as the closure of $C^\infty(M)$ under the norms
\eq{
	\norm{f}_2=\norm{f}_{L^2(M)}=\sqrt{\int_M\abs{f}^2 \d V_g} \quad \tn{and} \quad \norm{f}_{H^s(M)}=\norm{(1+\Delta_M)^{s/2}f}_2,
}
\noindent respectively, where $dV_g$ is the volume form induced by $g$. We establish the following result.

\begin{thm}
\label{thm: euclidian stability}
	Let $\Omega\subset \R^3$ be a bounded domain diffeomorphic to $\B^3$ with smooth boundary of area $4\pi$. For sufficiently small $\epsilon>0$, if
	\eq{
		\norm{[\Lambda,\Delta_{\partial\Omega}]}_{H^1(\partial\Omega)\to L^2(\partial\Omega)}<\epsilon,
	}
	then there exists a conformal parametrization $\psi\colon \S^2\to \partial\Omega$ and a vector $c_{\partial\Omega}\in \R^3$ satisfying
	\eq{
		\norm{\psi-(c_{\partial\Omega}+\iota)}_{H^2(\S^2)}< C \epsilon,
	}
	where $\iota\colon \S^2\to \R^3$ is the standard embedding and $C$ is a universal constant.  
\end{thm}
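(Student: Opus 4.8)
The plan is to translate the hypothesis into a pointwise bound on the principal symbol of the commutator by means of Gohberg's lemma, then into $L^2$-smallness of the trace-free second fundamental form of $\partial\Omega$ via classical surface geometry, and finally to invoke the quantitative rigidity theorem of De~Lellis and M\"uller for nearly umbilical surfaces. Recall first that on the surface $\Sigma\coloneqq\partial\Omega$ the operator $\Lambda$ is a classical pseudodifferential operator of order one with principal symbol $\abs{\xi}_g$ ($g$ the induced metric) and subprincipal symbol of the form $\ell_0(x,\xi)=-\tfrac12 H(x)+b\,\mathring{\II}(\xi^\sharp,\xi^\sharp)/\abs{\xi}_g^2$, where $H$ is the mean curvature, $\mathring{\II}\coloneqq\II-Hg$ the trace-free second fundamental form, and $b$ a universal constant --- it is here that the flatness of the ambient $\R^3$ enters, through the Riccati-type equation governing the Dirichlet-to-Neumann operators of the level sets of the boundary-distance function. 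Since the principal symbol of $\Delta_\Sigma$ is $\abs{\xi}_g^2$, the square of that of $\Lambda$, the Poisson bracket of the leading symbols vanishes and $[\Lambda,\Delta_\Sigma]$ is of order one; working in the Weyl calculus one finds that its order-one symbol is $q_1=\tfrac1i\{\ell_0,\abs{\xi}_g^2\}$, which in geodesic normal coordinates at a point equals a nonzero universal multiple of
\eqs{
	\abs{\xi}_g\,\Big(\nabla_{\hat\xi}H-2b\,(\nabla_{\hat\xi}\mathring{\II})(\hat\xi,\hat\xi)\Big),\qquad \hat\xi\coloneqq \xi^\sharp/\abs{\xi}_g,
}
and hence vanishes identically in $\xi$ precisely when $\Sigma$ is a round sphere, recovering the rigidity statement of \cite{GKLP22,S25}.

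I would then make this quantitative. Set $P\coloneqq(1+\Delta_\Sigma)^{-1/2}$, which by the definition of the $H^1$-norm adopted above is an isometry $L^2(\Sigma)\to H^1(\Sigma)$; hence $A\coloneqq[\Lambda,\Delta_\Sigma]\,P$ is a pseudodifferential operator of order zero with $\norm{A}_{L^2\to L^2}\le\norm{[\Lambda,\Delta_\Sigma]}_{H^1\to L^2}<\epsilon$ and principal symbol $q_1(x,\xi)/\abs{\xi}_g$. Gohberg's lemma --- whose modern exposition occupies the last section of this paper --- bounds the essential norm, hence a fortiori the operator norm, of $A$ from below by $\sup_x\limsup_{\abs{\xi}\to\infty}\abs{q_1(x,\xi)}/\abs{\xi}_g$, which by homogeneity is $\sup\{\abs{q_1(x,\xi)}\st\abs{\xi}_g=1\}$. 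Thus this supremum is $<\epsilon$, and decomposing $q_1$ into its angular Fourier modes on each unit cosphere (the expression above is an odd trigonometric polynomial of degree three in the angle of $\hat\xi$) isolates the contributions of $\nabla\mathring{\II}$ and $\nabla H$; using the Codazzi identity $\tn{div}\,\mathring{\II}=\nabla H$ on $\Sigma\subset\R^3$ to identify the trace part of $\nabla\mathring{\II}$ with $\nabla H$, one gets $\norm{\nabla\mathring{\II}}_{C^0(\Sigma)}+\norm{\nabla H}_{C^0(\Sigma)}\lesssim\epsilon$.

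The next step is the purely geometric passage from $\norm{\nabla\mathring{\II}}_{C^0}+\norm{\nabla H}_{C^0}\lesssim\epsilon$ to $\norm{\mathring{\II}}_{L^2(\Sigma)}\lesssim\epsilon$. Here one would exploit $\abs{\Sigma}=4\pi$ together with Gauss--Bonnet ($\int_\Sigma K\,\d V_g=4\pi$, $K=H^2-\tfrac12\abs{\mathring{\II}}^2$) and the Hsiung--Minkowski integral identities (such as $\int_\Sigma H\ip{x,\nu}\,\d V_g=\abs{\Sigma}$) --- in the spirit of the stability theory for Alexandrov's soap-bubble theorem --- to show that near-vanishing of the curvature gradients forces $H$ close to $1$ and $\mathring{\II}$ small in $L^2$, at a universal rate. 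Feeding this into the rigidity estimate of De~Lellis and M\"uller: since $\Omega$ is diffeomorphic to $\B^3$, $\Sigma$ is a closed connected surface of genus zero with $\abs{\Sigma}=4\pi$, so once $\norm{\mathring{\II}}_{L^2(\Sigma)}$ is small enough --- which holds for $\epsilon$ small --- there exist a conformal parametrization $\psi\colon\S^2\to\Sigma$ and a vector $c_{\partial\Omega}\in\R^3$ with $\norm{\psi-(c_{\partial\Omega}+\iota)}_{W^{2,2}(\S^2)}\le C_1\norm{\mathring{\II}}_{L^2(\Sigma)}$ for a universal $C_1$; since $W^{2,2}(\S^2)=H^2(\S^2)$, chaining the estimates yields the theorem.

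I expect the genuine obstacle to be the geometric step just described. Unlike in the model case, the symbol $q_1$ records only first derivatives of the curvature quantities, never $\mathring{\II}$ itself, so the hypothesis provides no a priori control on the magnitude of $H$ or $\mathring{\II}$, on the diameter of $\Sigma$, or on its injectivity radius: one must rule out, for instance, long thin protrusions using only the area constraint and the smallness of $\nabla(\text{curvature})$. Getting past this --- presumably through a combination of the integral identities above, a compactness argument, and the fact that the smallness is quantitative rather than merely asymptotic --- is the crux. By contrast the microlocal step is a clean application of Gohberg's lemma once the symbol of $\Lambda$ is in hand, and the final invocation of De~Lellis--M\"uller is a black box matched precisely to the normalizations in the hypothesis.
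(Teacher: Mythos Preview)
Your three-step architecture --- Gohberg's lemma to extract a pointwise bound on curvature gradients, a geometric step to upgrade this to $\norm{\mathring{\II}}_{L^2}\lesssim\epsilon$, then De~Lellis--M\"uller --- is exactly the paper's, and your microlocal computation and the final black-box invocation both match. You have also correctly located the real content in the middle step and correctly diagnosed the difficulty: the symbol sees only $\nabla\II$, so one must manufacture control on $\II$ itself, and in particular rule out long thin necks.

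Where the proposal falls short is that this middle step is not actually carried out; you suggest Gauss--Bonnet, Hsiung--Minkowski, and possibly compactness, but none of these is the mechanism the paper uses, and a compactness argument would in any case not deliver the linear rate in $\epsilon$ that the statement asserts. The paper's argument is quite different and more concrete. First, since $\Sigma\cong\S^2$, Poincar\'e--Hopf guarantees an umbilical point $p$ with principal curvatures $k_1=k_2=k$. Second, one obtains a \emph{universal diameter bound} by a dichotomy on $\abs{k}$: if $\abs{k}$ is small, then $\abs{H}\le\abs{k}+d\cdot\norm{\nabla\II}_\infty$ everywhere (integrating the gradient bound along geodesics), and Topping's inequality $d\le\tfrac{32}{\pi}\int_\Sigma\abs{H}$ closes to give $d\le 2025$; if $\abs{k}$ is large, then the gradient bound propagates to keep both principal curvatures large on a ball of radius $\pi$, forcing $K\ge 3$ there, and a Bonnet--Myers argument gives $\Sigma=B(p,\pi)$. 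Third, with $d$ bounded, the gradient bound propagates from $p$ to give $\abs{\II-k\,\tn{Id}}\lesssim\epsilon$ globally; Gauss--Bonnet ($\int_\Sigma K=4\pi=\abs{\Sigma}$) then pins $k$ to within $O(\epsilon)$ of $1$, whence $\norm{\mathring{\II}}_\infty\lesssim\epsilon$. The umbilical point is thus not a decoration but the anchor for the entire estimate, and Topping's inequality (whose linear exponent in dimension two is essential, cf.\ the paper's Remark after the diameter lemma) is the tool that replaces the integral identities you had in mind.
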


\noindent The careful reader notes that the result comes with additional assumptions when compared to Theorem \ref{thm: commutator means ball}. Indeed, we restrict our attention exclusively to domains in $\R^3$ which are diffeomorphic to the ball. We conjecture in Open Problem \ref{opr: other dims} that both these dimensional and topological assumptions, which we use to apply Topping's inequality in Lemma \ref{lem: diameter bound} and to guarantee the existence of an umbilical point on the boundary throughout Section \ref{sec: euclidian}, can in fact be removed. One also notices that we view $[\Lambda, \Delta_{\partial\Omega}]$ as mapping $H^1({\partial\Omega})\to L^2({\partial\Omega})$, despite its initial appearance as an operator of order two. In fact, as explained in detail in the proof of Proposition \ref{prop: gohberg to geometric}, the principal symbol of this commutator always vanishes, thus justifying this choice.

Section \ref{sec: potential} then treats manifolds conformal to the unit ball, obtaining both uniqueness and stability results by combining harmonic analysis with complex geometric optic (CGO) methods standard in the study of the Calder\'on problem. It is well known that studying the Dirichlet-to-Neumann map across conformal metrics is equivalent to studying the corresponding operator when extending to the interior with respect to $\Delta+q$, for some smooth potential $q$, instead. We therefore briefly describe here the standard reductions which convert this problem to studying the Schr\"odinger equation in the ball, then state our results in this latter setting.

To this end, let $M=\B^n$, $n\geq 3$, be equipped with the metric $g_\phi=e^{2\phi}g_\tn{std}$. This manifold is conformal to the standard metric on the ball via the function $\phi\colon M\to \R$, which we suppose to vanish in a neighborhood of the boundary. A classical computation shows that
\eq{
	\Delta_\phi u=e^{-2\phi}\b{\Delta u +(n-2)\nabla\phi\cdot \nabla u}=0\iff \nabla \b{e^{(n-2)\phi}\cdot \nabla u}=0,
}
hence we obtain the same Dirichlet-to-Neumann map when considering extensions with respect to the operator $\nabla(\gamma\cdot \nabla u)$ with conductivity function $\gamma=e^{(n-2)\phi}$ (which, importantly, never vanishes). The Dirichlet-to-Neumann map of this divergence-form operator itself corresponds to that with respect to extensions by solutions to the Schr\"odinger equation
\eq{
	(\Delta+q)u=0 \quad\tn{with}\quad q=\frac{\Delta\sqrt{\gamma}}{\sqrt{\gamma}},
}
which yields a more tractable problem. We denote this operator by $\Lambda_q$. Explicitly, $\Lambda_qf=g$ if an only if there exists a function $u\in C^\infty(\B^n)$ such that

\eq{
	(\Delta+q)u=0\quad\tn{and}\quad \begin{cases}
		u|_{\S^{n-1}}=f\\
		\pd{u}{r}\big{|}_{\S^{n-1}}=g
	\end{cases}.
}

The $L^2$ stability estimates for $\Lambda_q$ imply $L^\infty$ stability estimates for the corresponding coefficient $\gamma$, and hence for the conformal factor $\phi$ as well. This process is outlined in detail in \cite{A88}. We therefore center our attention on studying how the analytic properties of the operator $\Lambda_q$, in the context of Question I and Question II, imply geometric symmetry of the potential $q$.

To precisely state the results, define first the projection
\eq{
\label{eq: radial projection}
	P\colon L^2(\B^n)\to L^2(\B^n)\quad\tn{by}\quad  Pf=\int_{SO(n)}f\circ R\d V,
}
where $V$ is the normalized Haar measure on $SO(n)$. We address Question I in Proposition \ref{prop: q1 for potentials} and prove that the potential $q$ is radial if and only if $\Lambda$ is a function of $\Delta_{\S^{n-1}}$ (hence the Dirichlet-to-Neumann map of a manifold conformal to the ball is a function of the boundary Laplacian if and only if the conformal factor is radial). Furthermore, we obtain the following corresponding stability estimate:

\begin{thm}
\label{thm: conf q1 stability}
	Suppose that $\norm{q}_{H^s(\B^n)}<M$ with $s>n/2$ and $M>0$, and say $\Lambda_q$ may be written as $\Lambda_q = f(\Delta_{\S^{n-1}})+E$ for some function $f\colon \R\to \R$ and ``error term'' $E$, a bounded map $E\colon H^{1/2}(\S^{n-1})\to H^{-1/2}(\S^{n-1})$ whose norm as an operator between these spaces we denote $\norm{E}_*$. If $\norm{E}_*$ is sufficiently small, then
	\eq{
		\norm{q-Pq}_2\leq C\cdot \omega(\,\norm{E}_*),
	}
	where $C$ depends on $M$, $s$ and $n$, and $\omega$ is a logarithmic modulus given by
	\eq{
		\omega(t)= \abs{\log t}^{-\delta} \quad\tn{for some }0<\delta<1\tn{ depending only on $n$}.
	}
\end{thm}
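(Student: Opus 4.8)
The plan is to exploit the quantitative version of the "$\Lambda_q$ commutes with $\Delta_{\S^{n-1}}$ implies $q$ radial" argument that underlies Proposition \ref{prop: q1 for potentials}, combined with CGO-based stability for the Schr\"odinger Dirichlet-to-Neumann map. First I would observe that writing $\Lambda_q = f(\Delta_{\S^{n-1}}) + E$ forces $[\Lambda_q, \Delta_{\S^{n-1}}] = [E,\Delta_{\S^{n-1}}]$, and since $E$ is small in the $H^{1/2}\to H^{-1/2}$ norm, the commutator $[\Lambda_q,\Delta_{\S^{n-1}}]$ is correspondingly small in a suitable (say $H^{3/2}\to H^{-3/2}$) operator norm. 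The key intermediate object is then the averaged potential: I would show that the integral-geometry argument producing radiality in the exact case is stable, i.e.\ that $\|\Lambda_q - \Lambda_{Pq}\|_*$ controls (and is controlled by) $\|[\Lambda_q,\Delta_{\S^{n-1}}]\|$ up to constants depending on $M,s,n$ — the point being that $\Lambda_{Pq} = P\Lambda_q P$ in an appropriate sense, and the off-diagonal blocks of $\Lambda_q$ with respect to the spherical-harmonic decomposition are exactly what the commutator measures. This reduces the theorem to a stability estimate of the form: if $\|\Lambda_{q_1} - \Lambda_{q_2}\|_* < \eta$ with $q_1 = q$, $q_2 = Pq$ both bounded in $H^s$, then $\|q_1 - q_2\|_2 \leq C\,\omega(\eta)$.

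The core analytic input is therefore the classical logarithmic stability estimate for the Calder\'on problem in the form due to Alessandrini, adapted to the Schr\"odinger setting on the ball: one constructs complex geometric optics solutions $u_j = e^{\zeta_j\cdot x}(1 + r_j)$ with $\zeta_1 + \zeta_2 = i\xi$, $|\zeta_j|\to\infty$, plugs them into the bilinear identity $\int (q_1 - q_2) u_1 u_2 = \langle (\Lambda_{q_1} - \Lambda_{q_2}) u_1|_{\partial}, u_2|_\partial\rangle$, and estimates the Fourier transform of $q_1 - q_2$ at frequency $\xi$: the boundary pairing contributes a term of size $\|\Lambda_{q_1}-\Lambda_{q_2}\|_* e^{C|\zeta|}$ (from the exponential growth of the CGO traces), while the remainder terms $r_j$ contribute $O(|\zeta|^{-1})$ times the $H^s$ bounds. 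Optimizing the splitting frequency $|\zeta| \sim \log(1/\eta)$ against the high-frequency tail of $q_1 - q_2$ — controlled using $\|q_1 - q_2\|_{H^s} \leq 2M$ via the interpolation $\|w\|_2^2 \lesssim \sum_{|\xi|\leq T}|\hat w(\xi)|^2 + T^{-2s}\|w\|_{H^s}^2$ — yields precisely the logarithmic modulus $\omega(\eta) = |\log\eta|^{-\delta}$ with $\delta$ depending on $n$ (and $s$, but the claimed statement pins $\delta$ to $n$, which is fine since one may always shrink $\delta$). The smallness hypothesis on $\|E\|_*$ is used only to ensure $\eta$ is in the regime where $\omega$ is defined and the constants behave.

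The main obstacle I anticipate is making rigorous the passage from "$E$ small" to a genuine bound on $\|\Lambda_q - \Lambda_{Pq}\|_*$: a priori, $\Lambda_q = f(\Delta_{\S^{n-1}}) + E$ only tells us $\Lambda_q$ is \emph{approximately} diagonal, not that its diagonal part equals $\Lambda_{Pq}$, so I would need to argue that the diagonalizable operator $f(\Delta_{\S^{n-1}})$ must itself be close to the Dirichlet-to-Neumann map of \emph{some} radial potential — most naturally $Pq$ — and this requires knowing that the map $q \mapsto \Lambda_q$ restricted to radial potentials has a well-understood (injective, with quantitative inverse modulus) behavior, plus an argument that $f$ is forced to match the radial symbol up to an error of size $\|E\|_*$. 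I would handle this by first establishing, via the block structure, that $Pq$ is determined by the diagonal blocks of $\Lambda_q$ and that $\|\Lambda_{Pq} - f(\Delta_{\S^{n-1}})\|_* \lesssim \|E\|_*$ follows from projecting the defining relation onto spherical-harmonic sectors, so that $\|\Lambda_q - \Lambda_{Pq}\|_* \leq \|E\|_* + \|f(\Delta_{\S^{n-1}}) - \Lambda_{Pq}\|_* \lesssim \|E\|_*$; the CGO stability estimate then closes the argument. A secondary technical point is ensuring the CGO construction and its remainder estimates hold on the ball with the $H^s$, $s>n/2$, regularity (rather than $L^\infty$) hypothesis on $q$ — this is standard but requires citing or reproving the relevant solvability estimates for $\Delta + \zeta\cdot\nabla$-type operators with Sobolev potentials.
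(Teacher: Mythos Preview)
Your reduction to Alessandrini's CGO stability is the right core input, but the bridge you propose---controlling $\norm{\Lambda_q - \Lambda_{Pq}}_*$ by $\norm{E}_*$---does not go through, and you have correctly flagged it as the main obstacle. The issue is that the diagonal part of $\Lambda_q$ in the spherical-harmonic decomposition is \emph{not} $\Lambda_{Pq}$: averaging $R^*\Lambda_q(R^{-1})^*$ over $SO(n)$ yields $\int_{SO(n)} \Lambda_{q\circ R}\d H$, and since $q\mapsto \Lambda_q$ is nonlinear this differs from $\Lambda_{Pq}=\Lambda_{\int q\circ R\,dH}$. Your proposed fix, that ``$\norm{\Lambda_{Pq} - f(\Delta_{\S^{n-1}})}_* \lesssim \norm{E}_*$ follows from projecting onto spherical-harmonic sectors,'' tacitly assumes exactly this identification; any direct estimate on $\norm{(\Lambda_q)_{\tn{diag}}-\Lambda_{Pq}}_*$ will involve $\norm{q-Pq}$, which is what you are trying to bound, so the argument is circular.

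The paper avoids this entirely by never comparing to $Pq$ at the level of Dirichlet-to-Neumann maps. For each fixed $R\in SO(n)$ one has the exact identity $\Lambda_{q\circ R}=R^*\Lambda_q(R^{-1})^*=f(\Delta_{\S^{n-1}})+R^*E(R^{-1})^*$, hence $\Lambda_q-\Lambda_{q\circ R}=E-R^*E(R^{-1})^*$ has $*$-norm at most $2\norm{E}_*$. Alessandrini's estimate, applied to the pair $q$ and $q\circ R$ (both with $H^s$ norm below $M$), gives $\norm{q-q\circ R}_2\leq C\,\omega(\norm{E}_*)$ uniformly in $R$. Integrating $\norm{q-R^*q}_2^2$ over $SO(n)$ against Haar measure produces exactly $2\norm{q-Pq}_2^2$, and the theorem follows. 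Note that the commutator $[\Lambda_q,\Delta_{\S^{n-1}}]$ plays no role; only the covariance $\Lambda_{q\circ R}=R^*\Lambda_q(R^{-1})^*$ and the rotation-invariance of $f(\Delta_{\S^{n-1}})$ are used.
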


% Should I work to get converse stability estimate?

\noindent Note such logarithmic stability is standard in this type of inverse problem. We then turn our attention to Question II, and prove the following result, akin to Theorem \ref{thm: euclidian stability} in the context of infinitesimal conformal perturbations:

\begin{thm}
\label{thm: conformal inf q2}
	For $t$ in some interval $(-\epsilon, \epsilon)$ with $\epsilon >0$, consider a smoothly varying family of potentials $q_t$ on $\B^{n\geq3}$ with $q_0\equiv 0$. Writing $q'=\pd{}{t}\big{|}_{t=0}q_t$ and $\Lambda'\coloneqq \pd{}{t}\big{|}_{t=0}\Lambda_{q_t}$, we have
	\eq{
		[\Lambda',\Delta_{\S^{n-1}}]=0\quad\tn\quad\iff \quad q'=Pq'.
	}
\end{thm}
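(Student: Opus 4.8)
The plan is to reduce the statement to a computation with the linearized Dirichlet-to-Neumann map and the spherical harmonic decomposition. First I would record the linearization formula for $\Lambda_{q_t}$ at $t=0$. Since $q_0 \equiv 0$, the base solutions are harmonic, so for $f \in C^\infty(\S^{n-1})$ with harmonic extension $u_f$ (the standard harmonic extension in the ball), the Fréchet derivative $\Lambda'$ is given by the familiar first-order perturbation identity
\eq{
	\ip{\Lambda' f, h}_{L^2(\S^{n-1})} = \int_{\B^n} q'\, u_f\, u_h \d V,
}
for all $f,h \in C^\infty(\S^{n-1})$, where $u_h$ is the harmonic extension of $h$. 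This is the standard pairing (derived, e.g., by differentiating the bilinear form $\int (\Delta + q_t) u \cdot v$ and using that the boundary values are fixed); one gets symmetry of $\Lambda'$ for free, which matches the fact that $[\Lambda',\Delta_{\S^{n-1}}]=0$ must be interpreted via simultaneous diagonalizability.

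Next I would diagonalize. The eigenspaces of $\Delta_{\S^{n-1}}$ are the spaces $\mathcal{H}_k$ of spherical harmonics of degree $k$, with eigenvalue $\lambda_k = k(k+n-2)$, and these are pairwise distinct. A symmetric operator commutes with $\Delta_{\S^{n-1}}$ if and only if it preserves each $\mathcal{H}_k$, i.e. $\ip{\Lambda' f, h} = 0$ whenever $f \in \mathcal{H}_j$, $h \in \mathcal{H}_k$ with $j \neq k$. The harmonic extension of $Y \in \mathcal{H}_k$ is $r^k Y(\theta)$, so the pairing becomes
\eq{
	\ip{\Lambda' Y_j, Y_k} = \int_0^1 r^{j+k} \l( \int_{\S^{n-1}} q'(r\theta)\, Y_j(\theta)\, Y_k(\theta)\, d\theta \r) r^{n-1} \d r.
}
Thus commutativity is equivalent to: for every $r$, the radial slice $q'(r,\cdot)$, viewed as a multiplication operator on $L^2(\S^{n-1})$, has vanishing off-diagonal blocks between distinct $\mathcal{H}_j$ and $\mathcal{H}_k$ — at least after integration against the weight $r^{j+k+n-1}$. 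Conversely, $q' = Pq'$ exactly says $q'(r,\cdot)$ is constant on each sphere, which trivially kills all off-diagonal pairings. So the "$\Leftarrow$" direction is immediate, and the real content is "$\Rightarrow$".

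For the forward direction the key step is to pass from the single weighted integral in $r$ to pointwise-in-$r$ information. I would expand the angular part of $q'$ in spherical harmonics, $q'(r\theta) = \sum_{m} c_m(r)\, Z_m(\theta)$ with $Z_m$ ranging over an orthonormal basis adapted to the decomposition $L^2(\S^{n-1}) = \bigoplus_k \mathcal{H}_k$. The product $Y_j Y_k$ decomposes via Clebsch–Gordan/Gaunt-type rules into harmonics of degrees $|j-k|, |j-k|+2, \dots, j+k$; the vanishing conditions, as $j,k$ range over all pairs with $j \neq k$, then force $\int_0^1 c_m(r)\, r^{j+k+n-1} \d r = 0$ for all $m$ with $\deg Z_m \geq 1$ and all admissible $j+k$. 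Since the exponents $j+k$ that arise (for fixed parity and degree of $Z_m$) include an infinite arithmetic progression, and since $\{r^N\}_{N \in S}$ is dense in a suitable sense when $S$ is such a progression (a Müntz–Szász type density argument), one concludes $c_m \equiv 0$ for every non-constant $Z_m$, i.e. $q'(r,\cdot)$ is radial, which is precisely $q' = Pq'$.

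The main obstacle I anticipate is the last step: turning the countable family of moment conditions $\int_0^1 c_m(r) r^{j+k+n-1}\d r = 0$ into $c_m \equiv 0$. One must check that for each fixed non-constant harmonic degree $\ell = \deg Z_m$ there really are enough pairs $(j,k)$, $j\neq k$, with $Y_jY_k$ having a nonzero $Z_m$-component and with $j+k$ ranging over an infinite set with bounded gaps — this requires knowing the Gaunt coefficients $\int Y_j Y_k Z_m$ are nonzero for infinitely many such pairs, which is a standard but slightly delicate fact about products of spherical harmonics (it follows from the non-degeneracy of the coupling of $\mathcal{H}_j$ and $\mathcal{H}_k$ for $|j-k| \leq \ell \leq j+k$, $\ell \equiv j+k \bmod 2$). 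Granting that, the Müntz–Szász theorem (the exponents $j+k+n-1$ form a sequence $\{N_i\}$ with $\sum 1/N_i = \infty$) gives completeness of $\{r^{N_i}\}$ in $C([0,1])$ or $L^2$, hence $c_m = 0$. A secondary technical point is justifying the interchange of the $r$-integral with the spherical-harmonic sum and the regularity needed ($q'$ smooth suffices), and confirming the interpretation of $[\Lambda',\Delta_{\S^{n-1}}]=0$ as block-diagonality given that $\Lambda'$ need not be elliptic or even of a fixed order — but since it is symmetric and bounded $H^{1/2}\to H^{-1/2}$, the spectral-theoretic reduction to preserving eigenspaces of $\Delta_{\S^{n-1}}$ goes through.
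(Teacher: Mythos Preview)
Your proposal is correct and shares the paper's opening moves: the linearization identity $\langle \Lambda' f, g\rangle = \int_{\B^n} q'\, u_f\, u_g$ and the reduction of $[\Lambda', \Delta_{\S^{n-1}}] = 0$ to the vanishing of $\int_{\B^n} q'\, uv$ over all pairs $u,v$ of homogeneous harmonic polynomials of distinct degree. Where you diverge is in proving that this vanishing forces $q'$ to be radial. You attack it directly in dimension $n$: expand $q'(r\theta)$ in spherical harmonics, use the Clebsch--Gordan structure of products $\mathcal{H}_j\cdot\mathcal{H}_k$ to extract, for each nonconstant angular coefficient $c_m(r)$, an infinite family of vanishing moments, and close with M\"untz--Sz\'asz. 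The paper instead isolates this step as Lemma~\ref{lem: radial iff integrate to zero against pairs} and proves it by induction on $n$: the base case $n=2$ is a bare-hands Fourier-series moment computation in the disc, and for the inductive step one integrates $q$ along lines orthogonal to an arbitrary hyperplane $P$, applies the inductive hypothesis to see each fiber integral $q_P$ is radial on $P$, and then invokes the Fourier projection-slice theorem to conclude that $\widehat{q}$, hence $q$, is radial. Your route stays entirely on the sphere and is conceptually direct, but it leans on the representation-theoretic fact that the multiplication map $\mathcal{H}_j\otimes\mathcal{H}_k\to\mathcal{H}_\ell$ is onto for every $\ell$ in the admissible range (you rightly flag this as the delicate point; note too that a single product $Y_jY_k$ couples several angular modes at once, so one must genuinely vary over all $Y_j,Y_k$ and all pairs with fixed sum $j+k$ to isolate a single $c_m$). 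The paper's induction sidesteps that representation-theoretic input entirely, at the price of bringing in the Fourier transform and the slice theorem.
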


\noindent In fact, we conjecture in Open Problem \ref{opr: commute iff radial} that the above theorem holds beyond the perturbative setting, so every conformal factor which induces a Dirichlet-to-Neumann map commuting with the Laplacian on the boundary sphere is itself radial.

Finally, Appendix \ref{sec: Gohberg lemma} provides a detailed, modern  exposition of Gohberg's lemma, a classical result in microlocal analysis which is nonetheless seemingly hard to find in the present literature. This result is a crucial step in Section \ref{sec: euclidian}.

\vspace{1em}

\noindent \textbf{Acknowledgements:} The author thanks Otis Chodosh, Josef Greilhuber, Rafe Mazzeo, Gregory Parker and Iosif Polterovich for many helpful conversations throughout the project. This work was in part supported by an NSERC-PGSD grant.

\vspace{5em}

\section{Stability among Euclidian domains}
\label{sec: euclidian}

In this section, we focus on understanding the stability aspect of Question II for a bounded Euclidian domain $\Omega\subset \R^3$ with smooth boundary. The main result here is Theorem \ref{thm: euclidian stability}, whose proof proceeds in two mains steps. First, we apply Gohberg's lemma (see Appendix \ref{sec: Gohberg lemma}) to convert the analytic assumption on the norm of the commutator into a geometric condition in terms of the gradient of the second fundamental form $\II$ of the boundary, in Proposition \ref{prop: gohberg to geometric}. The second step is to convert this bound into the desired geometric stability. We deduce in Lemma \ref{lem: diameter bound} an intrinsic diameter estimate by applying Topping's inequality and the Bonnet-Meyers theorem. Then, by combining the diameter bound with the fact that the gradient of the second fundamental form is controlled, we obtain that $\partial \Omega$ is a ``nearly umbilical surface'' and use a well known result from \cite{DLM05} to conclude that it must in fact be close to a sphere, thus establishing Theorem \ref{thm: euclidian stability}.

To begin, we obtain from the assumptions of Theorem \ref{thm: euclidian stability} a bound on the gradient of the second fundamental form.

\begin{prop}
\label{prop: gohberg to geometric}
	With $\Omega\subset \R^n$ a bounded domain with nonempty smooth boundary,
	\eq{
		\norm{[\Lambda,\Delta_{\partial \Omega}]}_{H^1(\partial \Omega)\to L^2(\partial \Omega)}<\epsilon\quad \implies\quad \norm{\nabla \II}_\infty<C_n\epsilon,
	}
	where $C_n$ is a constant depending only on the ambient dimension $n$.
\end{prop}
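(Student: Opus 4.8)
The plan is to trade the analytic hypothesis for a pointwise symbolic inequality via Gohberg's lemma (Appendix \ref{sec: Gohberg lemma}) and then read the gradient of $\II$ off the resulting symbol. Recall that for a smoothly bounded Euclidean domain, $\Lambda$ is a classical pseudodifferential operator of order $1$ on the closed manifold $\partial\Omega$, with principal symbol $|\xi|_g$ (the length with respect to the induced boundary metric $g$) and with a subprincipal symbol $a_0(x,\xi)$ that, by the Lee--Uhlmann factorization of the Laplacian in boundary normal coordinates, is an explicit expression in which $\II$ first appears, entering through the normal derivative of the metric at the boundary. The boundary Laplacian $\Delta_{\partial\Omega}$ is a differential operator with full symbol $|\xi|_g^2+\ell_1+\ell_0$, the lower terms depending on first derivatives of $g$. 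First I would feed these into the composition formula for $[\Lambda,\Delta_{\partial\Omega}]=\Lambda\Delta_{\partial\Omega}-\Delta_{\partial\Omega}\Lambda$: the order-$3$ term cancels identically, and the order-$2$ term equals $\tfrac1i\{|\xi|_g,|\xi|_g^2\}=0$, being a Poisson bracket of two functions of $|\xi|_g$. This is precisely the vanishing of the principal symbol alluded to in the introduction, and it certifies that $[\Lambda,\Delta_{\partial\Omega}]$ is genuinely of order $1$, hence bounded $H^1(\partial\Omega)\to L^2(\partial\Omega)$.

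The core of the argument is to identify the order-$1$ symbol $c_1(x,\xi)$ of the commutator, which is homogeneous of degree $1$ in $\xi$ and is assembled from $a_1,a_0$ and $\ell_2,\ell_1,\ell_0$ via Poisson brackets and the second-order term of the composition formula. Evaluating at a fixed point $x_0$ in $g$-geodesic normal coordinates centered there (so $\partial g(x_0)=0$ and the Christoffel symbols vanish at $x_0$) isolates the derivative structure: the term $\tfrac1i\{a_0,|\xi|_g^2\}$ differentiates the $\II$-dependence of $a_0$ and produces a contraction of $\nabla\II(x_0)$ against $\xi$, while every remaining contribution — in particular $\tfrac1i\{a_1,\ell_1\}$ and the second-order composition terms, which involve $\partial^2 g(x_0)$, i.e.\ the intrinsic curvature of $\partial\Omega$ — produces only expressions algebraic in $\II$, $H$ and that curvature, without derivatives. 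Rewriting the intrinsic curvature through the Gauss equation and reducing via the Codazzi equation (both pointwise identities for hypersurfaces in $\R^n$), these algebraic-in-$\II$ contributions cancel, leaving $c_1(x,\xi)$ equal to a fixed, dimension-dependent, nondegenerate linear contraction of the (Codazzi-)symmetric $3$-tensor $\nabla\II(x)$ with $\xi$. Since such a contraction is injective on symmetric $3$-tensors for $n\ge3$, there is a dimensional constant $c_n>0$ with $\sup_{|\xi|_g=1}|c_1(x,\xi)|\ge c_n\,|\nabla\II(x)|$ for every $x\in\partial\Omega$.

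It then remains to invoke Gohberg's lemma. Composing on the right with $(1+\Delta_{\partial\Omega})^{-1/2}$, which is an isometry $L^2(\partial\Omega)\to H^1(\partial\Omega)$ by our definition of the $H^1$ norm, yields a classical zeroth-order operator $A$ with $\norm{A}_{L^2\to L^2}\le\norm{[\Lambda,\Delta_{\partial\Omega}]}_{H^1\to L^2}<\epsilon$ and with principal symbol $c_1(x,\xi)/|\xi|_g$. Gohberg's lemma bounds $\norm{A}_{L^2\to L^2}$ below by the essential norm of $A$, hence below by $\sup_{x\in\partial\Omega,\,|\xi|_g=1}|c_1(x,\xi)|\ge c_n\norm{\nabla\II}_\infty$. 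Combining the two inequalities gives $\norm{\nabla\II}_\infty\le c_n^{-1}\epsilon$, so the claim holds with $C_n=c_n^{-1}$.

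I expect the main obstacle to be the middle step: carrying the symbol computation far enough to both extract the $\nabla\II$ term and verify that every contribution algebraic in $\II$ — especially those entering through the lower-order symbol $\ell_1$ of $\Delta_{\partial\Omega}$ and through the intrinsic boundary curvature — genuinely cancels rather than merely being of lower order. The vanishing of $c_1$ on the round ball is a useful consistency check but does not by itself force the cancellation at a general point; one really has to use the pointwise Gauss and Codazzi identities, together with a check that the resulting linear map on symmetric $3$-tensors is injective (which is where the restriction $n\ge3$, and the exceptional behavior in dimension two, naturally reappear). A minor additional point is to note that $\Lambda$ is polyhomogeneous, so that Gohberg's lemma applies to $A$ verbatim.
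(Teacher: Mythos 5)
Your overall architecture (reduce to a pointwise bound on the order-one symbol of the commutator via Gohberg's lemma, then invert a linear contraction to recover $\nabla\II$) is exactly the paper's, and your treatment of the Gohberg step, including the composition with $(1+\Delta_{\partial\Omega})^{-1/2}$, matches Corollary \ref{cor: Gohberg for any order}. But the middle step --- the identification of the order-one symbol $c_1$ --- is precisely where you stop and say you ``expect the main obstacle'' to lie, and as written it is a genuine gap: you have not produced the symbol, you have not verified the claimed cancellation of the curvature terms coming from $\{a_1,\ell_1\}$ and the second-order composition terms, and you have not proved the injectivity of the resulting contraction on symmetric $3$-tensors (which is the actual linear-algebraic content of the final step, not an automatic fact). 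Without these, the proof does not close.

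The route you chose also makes the computation much harder than it needs to be. The paper instead writes $\Lambda=\sqrt{\Delta_{\partial\Omega}}+B$ with $B\in\Psi^0_{\tn{cl}}$ and uses the explicit formula $\sigma_0(B)=\tfrac12\bigl(\II(\xi,\xi)/\abs{\xi}^2-(n-1)H\bigr)$ from \cite{GKLP22}. Then $[\Lambda,\Delta_{\partial\Omega}]=[B,\Delta_{\partial\Omega}]$ \emph{exactly}, since $\sqrt{\Delta_{\partial\Omega}}$ commutes with $\Delta_{\partial\Omega}$; the commutator of a zeroth-order and a second-order operator is of order one with principal symbol equal to the single Poisson bracket $\{\sigma_0(B),\abs{\xi}^2\}$, so the lower-order symbols of $\Delta_{\partial\Omega}$, the second-order composition terms, and the intrinsic curvature never enter, and no Gauss-equation cancellation has to be checked. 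Evaluating in normal coordinates yields $\sum_i\xi_i\bigl(\sum_{j,k}\xi_j\xi_k\abs{\xi}^{-2}\,\partial\II_{jk}/\partial x_i-(n-1)\,\partial H/\partial x_i\bigr)$, and the recovery of $\norm{\nabla\II}_\infty$ is then done by (i) Codazzi symmetry of $\partial\II_{jk}/\partial x_i$ in all three indices, (ii) equivalence of norms on the finite-dimensional space of homogeneous cubics in $\xi$ (so smallness of the polynomial on the unit sphere gives smallness of its coefficients), and (iii) substituting $\partial H/\partial x_i=(n-1)^{-1}\sum_j\partial\II_{jj}/\partial x_i$ and solving the resulting linear relations. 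Step (ii)--(iii) is exactly the injectivity you asserted; it must be, and in the paper is, carried out explicitly. If you redo your computation starting from the $\sqrt{\Delta_{\partial\Omega}}+B$ decomposition, your argument becomes the paper's.
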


\begin{proof} We first recall several fundamental facts, following \cite{GKLP22}, about the involved operators. The Dirichlet-to-Neumann map is a pseudodifferential operator of order one which may be written as
	\eq{
	\label{eq: DtoN expression as sqrt}
		\Lambda=\sqrt{\Delta_{\partial \Omega}}+B,
	}
	where $B$ is an order zero pseudodifferential operator. The principal symbol of $B$ is expressed in terms of the second fundamental form $\II$ and the mean curvature $H=\tn{tr}( \II)/(n-1)$ of ${\partial \Omega}$ as
	\eq{
		\sigma_0(B)(x,\xi)=\frac{1}{2}\b{\frac{\II(\xi,\xi)}{\abs{\xi}^2}-(n-1)H},
	}
	as explained in Section 2.1 of \cite{GKLP22}. From Equation (\ref{eq: DtoN expression as sqrt}), we note that the commutator $[\Lambda, \Delta_{\partial \Omega}]$ is in fact only of order $1$ (as noted in th introduction), and its principal symbol is thus given by the Poisson bracket $\{\sigma_0(B)(x,\xi),\abs{\xi}^2\}$. Choosing coordinates $x$ about a given point $\tilde x\in \partial \Omega$ so that the induced metric coefficients satisfy $g_{ij}(\tilde x)=\delta_{ij}$ and the first order derivates of the metric tensor vanish at $\tilde x$, we expand the Poisson bracket to obtain
	\eq{
		\sigma_1[\Lambda, \Delta_{\partial \Omega}]\b{x,\xi=(\xi_1,\dots, \xi_{n-1})}=\sum_{i}\xi_i\b{\sum_{j,k}\frac{\xi_j\xi_k}{\abs{\xi}^2}\pd{\II_{jk}}{x_i}-(n-1)\pd{H}{x_i}},
	}
	as in \cite{GKLP22}.
	Now, since $\norm{[\Lambda, \Delta_{\partial \Omega}]}_{H^1({\partial \Omega})\to L^2({\partial \Omega})}<\epsilon$ by assumption, a direct application of Gohberg's lemma (or, more precisely, of Corollary \ref{cor: Gohberg for any order}) yields
	\eq{
		\abs{\abs{\xi}^{-1}\sum_{i}\xi_i\b{\sum_{j,k}\frac{\xi_j\xi_k}{\abs{\xi}^2}\pd{\II_{jk}}{x_i}-(n-1)\pd{H}{x_i}}}<\epsilon,
	}
	which we rearrange to obtain
	\eq{
	\label{eq: poly with curvature}
		\abs{\sum_{i,j,k}\xi_i\xi_j\xi_k\pd{\II_{jk}}{x_i}-(n-1)\sum_{i,j}\xi_i\xi_j\xi_j\pd{H}{x_i}}<\epsilon \abs{\xi}^3.
	}
	At this point, we shall combine three observations:
	\begin{enumerate}
		\item The ambient space $\R^n$ is flat so the Codazzi equation gives that the coefficients $\partial \II_{jk}/\partial x_i$ are symmetric in $i,j,k$.
		\item Fixing $x$, the left had side of Equation (\ref{eq: poly with curvature}) is a polynomial in $\xi$ which is bounded in absolute value by $\epsilon$ on the unit disc. We may define two norms on the vector space of homogeneous polynomials in $n$ variables of degree $3$: the maximum absolute value of the function on the unit disc, or the maximum absolute value of the coefficients of each monomial. Since norms on finite dimensional vector spaces are all equivalent, we deduce that the monomial coefficients are bounded by $\epsilon$, up to a dimensional constant.
		\item Since $H=\sum \II_{jj}/(n-1)$ by definition, we have that 
	\eq{
		\pd{H}{x_i}=\frac{1}{n-1}\sum_j \pd{ \II_{jj}}{x_i},
	}
	and can substitute this expression in Equation \ref{eq: poly with curvature}.

	\end{enumerate}
	Altogether, we conclude
	\eq{
		\abs{\pd{\II_{ij}}{x_k}}<C_n\epsilon
	}
	for some dimensional constant $C_n$, as claimed.
\end{proof}

\begin{rmk}
	In \cite{GKLP22}, the authors aim to deduce that the mean curvature of ${\partial \Omega}$ is constant from the assumption that $[\Lambda, \Delta_{\partial \Omega}]=0$. To this end, they follow a similar reasoning to that above but instead obtain a linear system relating the coefficients $\partial \II_{jj}/\partial x_i$ to each other, and then conclude $\nabla H=0$. Equation (\ref{eq: poly with curvature}) is a generalization of this reasoning, and allows us to obtain the stronger result about the entire second fundamental form.
\end{rmk}

Inspired by the proof of Theorem 1.3 in \cite{GKLP22}, we may seek to use Proposition \ref{prop: gohberg to geometric} to bound the gradient of the mean curvature of $\Sigma$ in hope to then apply a stability-type result for the Alexandrov soap bubble theorem, such as the one proved in \cite{MP19}, to conclude the desired result. However, as the following example shows, it is not sufficient to simply consider $\nabla H$; one must look at the entire second fundamental form.

\begin{exa}
\label{exa: delaunay surfaces}
	Introduced nearly 200 years ago by Charles-Eug\`ene Delaunay in \cite{D41}, \textit{Delaunay surfaces} are non-compact surfaces of revolution in $\R^3$ with constant mean curvature. These surfaces are a classical object of study which display an early example of a bubbling phenomenon as the mean curvature is brought to zero. By carefully capping off a Delaunay surface to make it diffeomorphic to $\S^2$, we can obtain a surface with $\abs{\nabla H}<\epsilon$ for any positive $\epsilon$ which is nonetheless very far from the standard sphere. We briefly outline this construction, which is a simple smoothing-and-gluing argument.
	
	Consider an ellipse, with major axis $1$ and minor axis $\epsilon>0$, placed atop the $x$-axis in $\R^2$ so that its major axis aligns with the $y$-axis. Then, roll this ellipse along the $x$-axis and denote the path of its (initially) upper focus by $u_\epsilon(x)$, as in Figure 1 of Diagram \ref{dia: delaunay}. Let $u_0(x)=\sqrt{1-x^2}$ denote the limiting path, a semicircle. The Delaunay surfaces are exactly those surfaces of revolution which result from rotating $u_\epsilon$ about the $x$-axis in $\R^3$. These surfaces notably have constant mean curvature (and are in fact the only surfaces of revolution with this property).
	
	Now, take $\phi$ a smooth step function so that $0\leq \phi\leq 1$ with $\phi(x)=0$ when $x<-1/2$ and $\phi(x)=1$ when $x>1/2$, and define $S_\epsilon$ to be the surface of revolution obtained by rotating the graph over $[-1,\infty)$ of 
	\eq{
		v_\epsilon(x)\coloneqq(1-\phi(x))u_0(x)+\phi(x)u_\epsilon(x)
	}
	around the $x$-axis. The mean curvature of $S_\epsilon$ is constant away from the region $x\in[-1/2,1/2]$, since it agrees with a Delaunay surface. Furthermore, since $v_\epsilon\to u_0$ on this interval in the $C_3$ norm as $\epsilon \to 0$, and the gradient of the mean curvature is controlled by the $C_3$ norm, we deduce that by taking $\epsilon$ sufficiently close to zero we may ensure that the gradient of the mean curvature of $S_\epsilon$ is arbitrarily small. By repeating this capping procedure on the other end of $S_\epsilon$, we obtain a surface diffeomorphic to $\S^2$ with $|\nabla H|$ arbitrarily small which is nonetheless not close to a sphere. An sketch of such a surface is displayed in Figure 2 of Diagram \ref{dia: delaunay}. Note in particular that this construction may be performed for any prescribed area by capping at a fixed length and then rescaling.
\end{exa}

\begin{figure}[h!]
\label{fig: log ovals}
    \centering
    \begin{minipage}{0.45\textwidth}
        \centering
        \includegraphics[width=0.9\textwidth]{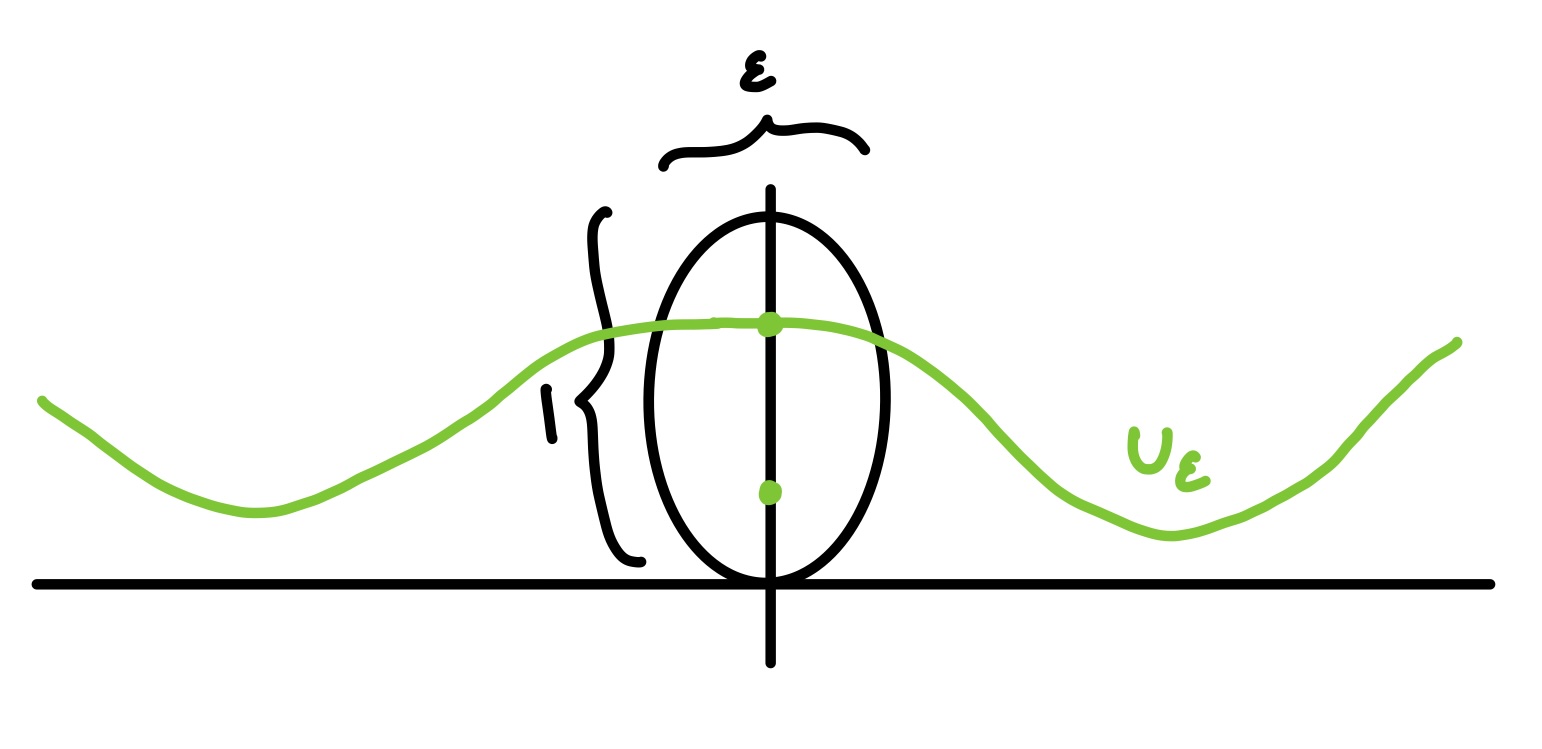} % first figure itself
        \caption{}
    \end{minipage}
    \begin{minipage}{0.45\textwidth}
        \centering
        \includegraphics[width=0.55\textwidth]{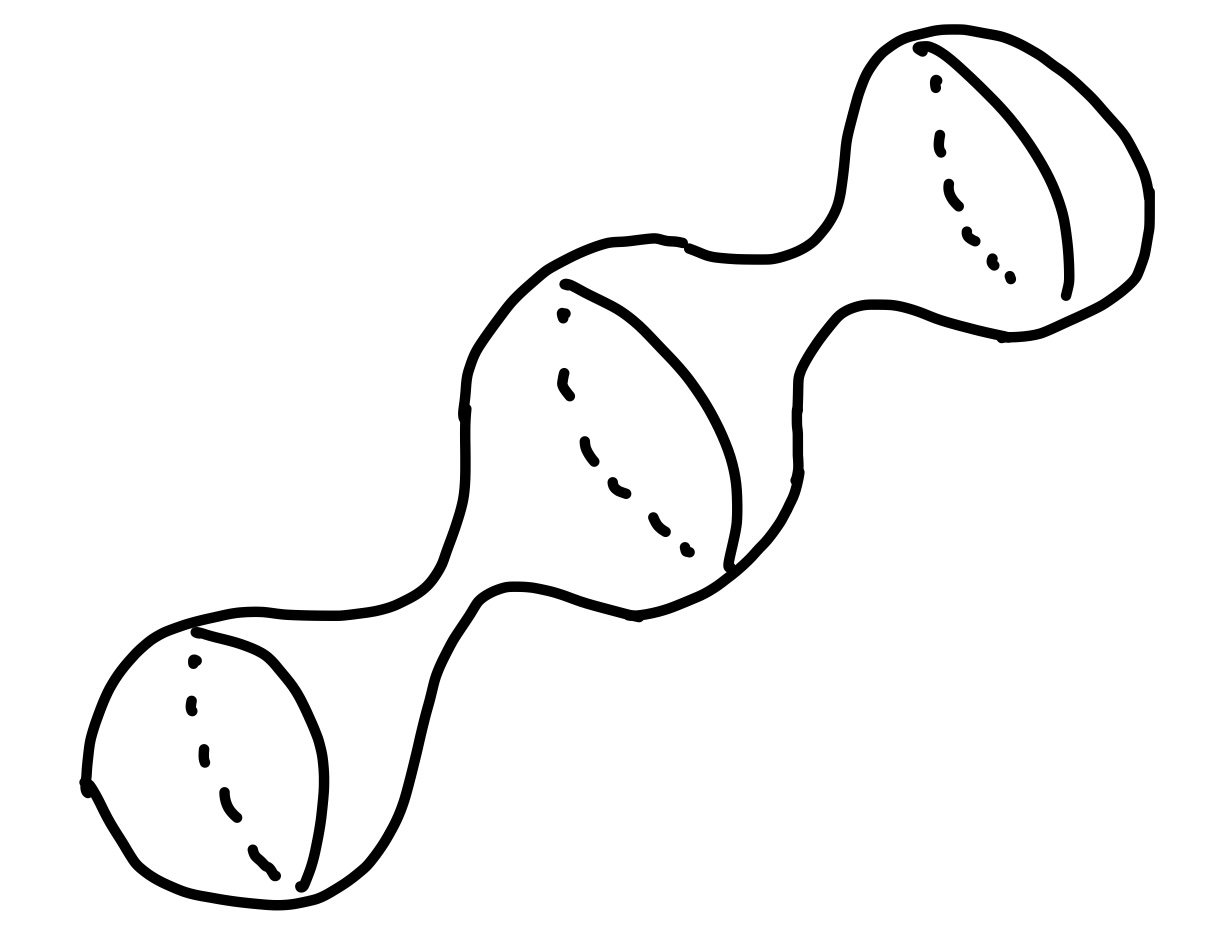} % second figure itself
        \caption{}
    \end{minipage}
\end{figure}
\vspace{-1em}

\begin{dia}
\label{dia: delaunay}
	Two illustrations of the constructions described in Example \ref{exa: delaunay surfaces}. Figure 1 displays the cross section of a Delaunay surface, and Figure 2 displays a capped Delaunay surface, which is therefore diffeomorphic to $\S^2$.
\end{dia}

\vspace{1em}

One must therefore use the full information given by the commutator bound on the gradient of the second fundamental form, not just consider mean curvature. In what follows, we further restrict our attention to surfaces in $\R^3$ homeomorphic to the sphere, as discussed in the introduction. These assumptions ensure the existence of an umbilical point and allow us to apply the necessary inequalities (Remark \ref{rmk: dimensional discussion} below addresses this in more detail). The following key lemma yields a diameter bound from these assumptions.

\begin{lem}
\label{lem: diameter bound}
	Let $\Sigma\subset \R^3$ be a smooth embedding of $\S^2$ of area $4\pi$ and suppose that $\abs{\nabla \II}<1/150$ at every point of $\Sigma$. Then the diameter $d$ of $\Sigma$ is less than $2025$.
\end{lem}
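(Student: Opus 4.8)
The plan is to combine two ingredients: an intrinsic diameter bound coming from positive Ricci curvature (via Bonnet–Myers), and Topping's inequality bounding the \emph{extrinsic} diameter by an integral of the mean curvature. The hypothesis $\abs{\nabla\II}<1/150$ controls how much $\II$ can vary across $\Sigma$, so the strategy is first to convert this into pointwise control of the curvature itself, using the existence of an umbilical point.

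First I would locate an umbilical point $p\in\Sigma$: since $\Sigma$ is a topological sphere, the Poincaré–Hopf/index argument for the line field of principal directions forces a zero, i.e.\ a point where the two principal curvatures coincide. At $p$ the traceless part of $\II$ vanishes. Next I would propagate this: because $\abs{\nabla\II}<1/150$, for any $q\in\Sigma$ we have $\abs{\II(q)-\text{(umbilical value at }p)}\lesssim \mathrm{dist}_\Sigma(p,q)/150$ along a minimizing geodesic, so the traceless second fundamental form $\mathring{\II}$ satisfies $\abs{\mathring{\II}}\lesssim d/150$ everywhere, where $d$ here is the intrinsic diameter. Likewise $\nabla H$ is small, so $H$ is nearly constant, and the Gauss equation $K = \det\II = H^2 - \frac12\abs{\mathring\II}^2$ then pins down the Gauss curvature: since $\int_\Sigma K\,dA = 4\pi$ and $\mathrm{Area}(\Sigma)=4\pi$, the average of $K$ is $1$, and combined with $H$ nearly constant and $\abs{\mathring\II}$ controlled, one gets that $H$ is bounded below away from $0$ and $K$ is bounded. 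A lower bound on $K$ gives a lower bound on Ricci (in two dimensions $\mathrm{Ric}=K g$), and Bonnet–Myers converts this into an intrinsic diameter bound $d\leq \pi/\sqrt{K_{\min}}$.

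The subtlety is circularity: the bound on $\abs{\mathring\II}$ above involves $d$ itself. I would close the loop with a continuity/bootstrap argument — assume provisionally that $d$ is not too large, derive from that the curvature bounds, feed them into Bonnet–Myers to get a strictly better bound on $d$, and check the constants chosen ($1/150$, $2025$) are slack enough that the fixed point exists. Concretely: if $d\le D_0$ then $\abs{\mathring\II}\le D_0/150$ and $\abs{H-H_0}\le D_0/150$ for the value $H_0$ at the umbilical point; the average-curvature identity forces $H_0^2$ near $1$, so $K = H^2 - \tfrac12\abs{\mathring\II}^2 \ge (1-cD_0)^2 - c'D_0^2 \ge \tfrac14$ say, hence Bonnet–Myers gives $d\le 2\pi < D_0$ for appropriately chosen $D_0$; the open-closed argument on the set of scalings where the provisional bound holds then finishes it. Finally, Topping's inequality $\mathrm{diam}_{\R^3}(\Sigma)\le C\int_\Sigma \abs{H}\,dA$ (valid for closed surfaces in $\R^3$) converts the now-controlled mean curvature and area into the stated extrinsic diameter bound of $2025$.

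The main obstacle I anticipate is tracking the explicit constants carefully enough to land inside $2025$ — the qualitative statement (small $\nabla\II$ $\Rightarrow$ bounded diameter) is not hard, but the numerics require being honest about the constant in Topping's inequality and in the norm-equivalence/geodesic-integration steps, and about the interplay between intrinsic and extrinsic diameter. A secondary point to handle with care is that a priori $\Sigma$ could a priori be disconnected in the "nearly degenerate" regime or $H$ could vanish somewhere; the Gauss–Bonnet normalization $\int K = 4\pi$ together with the area normalization is exactly what rules this out, and I would make sure that step is airtight before invoking Bonnet–Myers.
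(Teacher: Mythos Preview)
Your ingredients are the right ones --- the umbilical point, propagation of the $\nabla\II$ bound, Bonnet--Myers, and Topping --- but the architecture has a genuine gap. The bootstrap you sketch is circular and the proposed ``open-closed argument on the set of scalings'' does not make sense here: the surface $\Sigma$ is fixed, there is no continuous parameter along which to run a connectedness argument, and any rescaling destroys the area normalization and the hypothesis $|\nabla\II|<1/150$. Concretely, your chain ``$d\le D_0 \Rightarrow K\ge 1/4 \Rightarrow d\le 2\pi$'' never establishes the premise $d\le D_0$; likewise, the Gauss--Bonnet step you use to pin down $H_0^2\approx 1$ already requires $d$ to be bounded in order to control the oscillation of $K$. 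So the loop never closes.

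The paper breaks the circularity by a case split on the value $k$ of the common principal curvature at the umbilical point $p$, and each case uses one of your two tools \emph{without} presupposing a diameter bound. If $|k|\le 2$, then $|H|\le 2 + d/150$ everywhere by propagation, and Topping's inequality $d\le \frac{32}{\pi}\int_\Sigma |H|$ yields the \emph{linear} self-referential inequality $d\le 128(2 + d/150)$, which solves directly for $d$ (this is where the constant $2025$ comes from --- no bootstrap needed). If $|k|>2$, one applies Bonnet--Myers \emph{locally}: on the intrinsic ball $B(p,\pi)$ the principal curvatures remain above $2 - \pi/75$ in absolute value, so $K\ge 3$ there; the Jacobi-field argument then shows any minimizing geodesic from $p$ has length at most $\pi/\sqrt{3}<\pi$, hence $\Sigma = B(p,\pi)$. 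The point is that this local argument only needs the curvature lower bound on a ball of fixed radius $\pi$, not on all of $\Sigma$, so no a priori diameter information is required.

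Two smaller remarks. First, Topping's inequality already bounds the \emph{intrinsic} diameter, which is what the lemma and its later applications use; you do not need a separate extrinsic step at the end. Second, Gauss--Bonnet is not used at all in the paper's proof of this lemma (it appears only in the subsequent Proposition), so your concern about making that step airtight is misplaced --- the real missing idea is simply the dichotomy on $|k|$.
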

\begin{proof} We begin by recalling the classical fact that, as a consequence of the Poincar\'e-Hopf theorem, every smooth embedding of $\S^2$ in $\R^3$ has at least one umbilical point, meaning a point where the principal curvatures are equal to each other. Let therefore $p\in \Sigma$ be umbilical with principal curvatures $k_1=k_2=k\in \R$. To control the diameter of $\Sigma$, we combine two bounds, the first addressing the case of small $\abs{k}$ and the second the case of large $\abs{k}$. To this end, suppose initially that $\abs{k}\leq 2$. The mean curvature $H$ is half the trace of the second fundamental form, hence we may apply the assumption that $|\nabla\II|<150$ to bound its absolute value above by $2+d\cdot 1/150$. Topping's inequality \cite{T08} then implies that
	\eq{
	\label{eq: topping}
		d\leq \frac{32}{\pi}\int_\Sigma\abs{H}\leq 128\b{2+d/150}\implies d\leq 2025,
	}
	as desired.
	
	On the other hand, suppose instead that $\abs{k}>2$. Recall that eigenvalues of symmetric $2\times 2$ matrices depend in a Lipschitz fashion on its entries, with Lipschitz constant $2$. Therefore, we can bound the oscillation of the principal curvatures, as they are the eigenvalues of the second fundamental form. Deduce that in the intrinsic ball $B(p,\pi)\subset \Sigma$ of radius $\pi$ centered at $p$, the principal curvatures are bounded below by $2-\pi/75$ in absolute value, as a consequence of the assumed gradient bound on $\II$. The Gauss curvature $K$ is thus bounded below by $3$ in $B(p,\pi)$. Following the proof of the Bonnet-Meyers theorem \cite{C92}, we deduce that geodesics of length greater than $\pi/\sqrt{3}$ cannot be length minimizing in $B(p,\pi)$, and therefore $B(p,\pi)=\Sigma$ so the diameter bound is established.
\end{proof}

\begin{rmk}
\label{rmk: dimensional discussion}
	Note that the dimensional restriction is used twice in the above proof. First, we require an umbilical point to propagate our estimates, and the existence of such a point is not guaranteed in higher dimensions. Second, the integrand on the right hand side of Topping's inequality has an exponent which is 1 in the surface case, but grows with the dimension of the submanifold to which it is applied. The conclusion in Equation \ref{eq: topping} does not hold when the right hand side of the first string of inequalities is not linear.
\end{rmk}

We thus deduce that if the norm of the commutator $[\Lambda, \Delta_{\partial \Omega}]$ is sufficiently small, the diameter of $\partial \Omega$, and thus the oscillation of the second fundamental form, is controlled. The final step is then to conclude that this is enough to guarantee proximity to the sphere. To this end, we shall use the following well known result of De Lellis and M\"uller.

\begin{thm}[cf. Theorem 1.1 of \cite{DLM05}]
\label{thm: DLM}
	Let $\Sigma\subset \R^3$ denote a smooth closed surface of area $4\pi$. If
	\eq{
		\norm{\II- H\cdot \tn{Id}}_2^2<8\pi,
	}
	then there exists a conformal parametrization $\psi\colon \S^2\to \Sigma$ and a vector $c_\Sigma\in \R^3$ such that
	\eq{
		\norm{\psi-(c_\Sigma +\iota)}_{H^2(\S^2)}\leq C\norm{\II- H\cdot \tn{Id}}_2,
	}
	where $\iota\colon \S^2\to \R^3$ is the standard embedding and $C$ is a universal constant.  
\end{thm}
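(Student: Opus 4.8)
The plan is to follow the strategy of De Lellis and M\"uller in \cite{DLM05}, which reduces the estimate to two elliptic estimates on the round sphere --- one for the conformal factor of a uniformizing parametrization, one for the position map itself. The first step is to recast the hypothesis as a Willmore bound. Since the ambient space is flat, the Gauss equation gives $K_\Sigma=\kappa_1\kappa_2$, while pointwise $\abs{\II-H\cdot\tn{Id}}^2=\tfrac12(\kappa_1-\kappa_2)^2=2(H^2-K_\Sigma)$; integrating over $\Sigma$ and applying Gauss--Bonnet ($\int_\Sigma K_\Sigma\,\d V=4\pi$, as $\Sigma$ has genus zero) yields
\eqs{
	\int_\Sigma H^2\,\d V=4\pi+\tfrac12\norm{\II-H\cdot\tn{Id}}_2^2.
}
Thus the hypothesis says precisely that the Willmore energy of $\Sigma$ lies strictly below $8\pi$, which by the Li--Yau inequality keeps $\Sigma$ below the threshold at which embeddedness and control of the conformal type degenerate --- exactly what makes the estimate possible, and why the constant $8\pi$ is optimal. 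Invoking the uniformization theorem, fix a conformal diffeomorphism $\psi\colon\S^2\to\Sigma$, so that $\psi^*\delta=e^{2u}g_{\S^2}$ for some $u\in C^\infty(\S^2)$. Since $\psi$ is determined only up to the six-dimensional M\"obius group, I would normalize by requiring $\int_{\S^2}e^{2u}\,x_i\,\d V_{\S^2}=0$ for $i=1,2,3$ (a conformal balancing condition), a choice that will ultimately pin down both $\psi$ and the translation vector $c_\Sigma$.

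Next I would estimate the conformal factor. The function $u$ satisfies the Liouville equation $\Delta_{\S^2}u=K_\Sigma e^{2u}-1$, and Gauss--Bonnet with the area normalization forces $\int_{\S^2}(K_\Sigma e^{2u}-1)\,\d V_{\S^2}=0$. Writing $K_\Sigma e^{2u}-1=(K_\Sigma-H^2)e^{2u}+(H^2e^{2u}-1)$ and using $\abs{K_\Sigma-H^2}=\tfrac12\abs{\II-H\cdot\tn{Id}}^2$, one sees that the right-hand side of the Liouville equation is small in $L^1(\S^2)$ once the Willmore deficit is accounted for, and --- crucially --- that it carries the Jacobian/divergence structure to which a Wente-type estimate (in the spirit of compensated compactness) together with the Moser--Trudinger inequality apply. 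This should yield
\eqs{
	\norm{u}_{W^{1,2}(\S^2)}+\norm{u}_{L^\infty(\S^2)}\le C\,\norm{\II-H\cdot\tn{Id}}_2
}
whenever the right-hand side is sufficiently small.

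With the conformal factor controlled, I would estimate $\psi$ itself, combining two facts. First, conformality forces $\psi$ to have the same Dirichlet energy as $\iota$, namely $\int_{\S^2}\abs{\nabla_{\S^2}\psi}^2\,\d V_{\S^2}=2\,\tn{Area}(\Sigma)=8\pi$, so that once the low-frequency part of $\psi$ is shown close to $c_\Sigma+\iota$, this energy identity automatically forces the high-frequency part to be small. Second, $\psi$ solves the elliptic equation $\Delta_{\S^2}\psi=-e^{2u}\vec{H}_\Sigma$, where $\vec{H}_\Sigma=2H\,\nu$ is the mean curvature vector of $\Sigma$ ($\nu=\nu\circ\psi$ the unit normal); for the round embedding this is simply $\Delta_{\S^2}\iota=2\iota$. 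Projecting the right-hand side onto the eigenspaces of $\Delta_{\S^2}$ and using the conformal factor bound, the Codazzi identity $\nabla H=\tn{div}(\II-H\cdot\tn{Id})$ (which puts the curvature error terms into $H^{-1}(\S^2)$ with norm controlled by $\norm{\II-H\cdot\tn{Id}}_2$), and the mean-value constraints coming from Gauss--Bonnet, one finds that the constant mode of $\psi$ is $c_\Sigma$, its first-harmonic mode is within $C\norm{\II-H\cdot\tn{Id}}_2$ of $\iota$, and --- invoking the energy identity and elliptic regularity for $\Delta_{\S^2}$ on the orthogonal complement of the first two eigenspaces --- the remaining modes contribute at most $C\norm{\II-H\cdot\tn{Id}}_2$ in $H^2(\S^2)$. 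This is the asserted estimate.

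The step I expect to be the main obstacle is the conformal factor estimate. The nonlinearity $K_\Sigma e^{2u}$ is not pointwise small, so its near-constancy must be extracted purely from the $L^1$ smallness of the trace-free second fundamental form and from Gauss--Bonnet --- exactly the regime that calls for compensated-compactness arguments rather than naive elliptic estimates. Pinning down the mean value of $u$ along the way (equivalently, controlling $\int_\Sigma H$, and hence $\norm{H-1}_{L^2}$) is the delicate point, and it is there that the sharp threshold $8\pi$ is genuinely used. A secondary difficulty is the M\"obius normalization: since the uniformizing parametrization is unique only modulo a six-dimensional group, the normalization must be chosen carefully enough that the final bound transfers to $\psi$ itself --- and simultaneously exhibits $c_\Sigma$ --- rather than merely giving proximity to a round sphere up to an unspecified M\"obius transformation.
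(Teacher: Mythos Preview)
The paper does not prove this statement at all: Theorem~\ref{thm: DLM} is quoted verbatim from De Lellis and M\"uller \cite{DLM05} and used as a black box in the proof of Theorem~\ref{thm: euclidian stability}. There is therefore no ``paper's own proof'' to compare your proposal against.

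That said, your sketch is a faithful outline of the actual De Lellis--M\"uller argument, and the identification of the two main difficulties (the conformal-factor estimate via compensated compactness, and the M\"obius normalization) is accurate. One small point: you assume at the outset that $\Sigma$ has genus zero in order to apply Gauss--Bonnet with $\chi=2$ and to uniformize by $\S^2$, but the statement as written does not assume this. You should note that the hypothesis itself forces genus zero: for any closed surface $\int_\Sigma H^2\geq 4\pi$, so if $\chi(\Sigma)\leq 0$ then $\norm{\II-H\cdot\tn{Id}}_2^2=2\int_\Sigma H^2-4\pi\chi(\Sigma)\geq 8\pi$, contradicting the assumption. Beyond that, what you have written is a proof \emph{plan}, not a proof; the hard analysis (the Wente/Moser--Trudinger step for $u$, and the passage from $L^2$ control of $\II-H\cdot\tn{Id}$ to $H^2$ control of $\psi$) is gestured at rather than carried out, which is appropriate for a cited result but would not stand alone.
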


% Should I include a proof outline for completeness?

\begin{rmk}
	By applying the above result, we chose to employ the stability of \textit{nearly umbilical surfaces}, meaning surfaces whose second fundamental form is close to the mean curvature times the identity, rather than the stability of Alexandrov's soap bubble theorem, as described in \cite{MP19}, for example. This is because the former guarantees a stronger sense of closeness in the $H^2(\S^2)$ sense, rather than the $L^\infty$ closeness of the latter. However, in practice, both approaches are reasonable ways to conclude (since the diameter estimate obtained above now enables us to closely control the norm of the mean curvature globally), and one could certainly use the latter if interested in $L^\infty$ closeness. Note also that Theorem \ref{thm: DLM} is stated only for surfaces in $\R^3$, so we invoke also here the dimensional assumption. 
\end{rmk}

It remains to show that the requisites of Theorem \ref{thm: DLM} are indeed satisfied in our setting. We prove this in the case of an arbitrary surface $\Sigma$, and specify back to the boundary of our domain only in the final proof of the theorem.

\begin{prop}
\label{prop: nearly umbilical}
	Let $\Sigma\subset \R^3$ be a smooth embedding of $\S^2$ of area $4\pi$ and suppose that $\abs{\nabla \II}<\epsilon$ at every point of $\Sigma$, for $\epsilon>0$ sufficiently small. Then,
	\eq{
		\norm{\II-H\cdot \tn{Id}}_\infty<\tilde{C}\epsilon,
	}
	where $\tilde{C}$ is a universal constant.
\end{prop}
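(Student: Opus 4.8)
The plan is to bound the trace-free part $A:=\II-H\,\tn{Id}$ of the second fundamental form directly. Since $\dim\Sigma=2$ and $H=\tfrac12\tn{tr}\,\II$, this $A$ is exactly the tensor appearing in the statement, and it vanishes precisely at umbilical points. The idea is to propagate the vanishing of $A$ from an umbilical point across all of $\Sigma$, using the pointwise gradient bound together with the intrinsic diameter estimate of Lemma \ref{lem: diameter bound}.

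Concretely, one first shrinks $\epsilon$ so that $\epsilon\le 1/150$, which lets us invoke Lemma \ref{lem: diameter bound} to obtain a universal diameter bound $\tn{diam}(\Sigma)<2025=:D$. As in that lemma's proof, Poincar\'e--Hopf applied to the principal-direction line field furnishes an umbilical point $p\in\Sigma$, at which $A(p)=0$. Next one controls the gradient of the scalar function $f:=\abs{A}$: since $\nabla g=0$ one has $\nabla A=\nabla\II-(\nabla H)\otimes g$, and $\abs{\nabla H}$ is a dimensional multiple of $\abs{\nabla\II}$ because $\nabla H$ is half the metric trace of $\nabla\II$; hence $\abs{\nabla A}\le C_0\abs{\nabla\II}<C_0\epsilon$ pointwise for a universal constant $C_0$ (a quick Cauchy--Schwarz bookkeeping gives $C_0=2$). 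Kato's inequality for the Levi-Civita connection on symmetric $2$-tensors then yields $\abs{\nabla f}\le\abs{\nabla A}<C_0\epsilon$ wherever $f>0$; as $f$ is Lipschitz this is enough to integrate along curves.

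Finally, for an arbitrary $q\in\Sigma$, take a minimizing geodesic $\gamma$ from $p$ to $q$ — which exists by compactness and has length $\le D$ — and integrate the bound on $\abs{\nabla f}$ along it, obtaining $\abs{A(q)}=f(q)\le f(p)+C_0\epsilon\,\tn{length}(\gamma)\le C_0 D\,\epsilon$. Taking the supremum over $q\in\Sigma$ gives $\norm{\II-H\,\tn{Id}}_\infty\le\tilde C\epsilon$ with $\tilde C=C_0 D$ a universal constant, as claimed.

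The only point that really needs care is the first step: the phrase ``sufficiently small $\epsilon$'' must be read so that Lemma \ref{lem: diameter bound} applies (e.g. $\epsilon\le 1/150$), since a diameter bound is genuinely indispensable here — indeed Example \ref{exa: delaunay surfaces} shows that the parallel argument run on the scalar $\nabla H$ alone would fail, and it is precisely the full bound on $\nabla\II$ coming from Proposition \ref{prop: gohberg to geometric} that powers the diameter estimate. The remaining ingredients (boundedness of the trace-free projection, Kato's inequality, integration along a minimizing geodesic) are routine, so I do not expect a substantive obstacle beyond this bookkeeping.
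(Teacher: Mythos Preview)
Your argument is correct and, in one respect, slightly cleaner than the paper's. Both proofs rely on the same two ingredients --- the existence of an umbilical point $p$ (via Poincar\'e--Hopf) and the universal diameter bound from Lemma \ref{lem: diameter bound} --- and both conclude by integrating a pointwise gradient estimate along minimizing geodesics of length at most $D$. The difference is in \emph{which} tensor is propagated. You work directly with the trace-free part $A=\II-H\,\tn{Id}$, observe that $A(p)=0$ at any umbilical point regardless of the value of the principal curvature $k$ there, bound $\abs{\nabla A}$ by a dimensional multiple of $\abs{\nabla\II}$ (in fact $\abs{\nabla A}\le\abs{\nabla\II}$ since removing the trace part is an orthogonal projection), and integrate. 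The paper instead first pins down the value of $k$ at $p$: using Gauss--Bonnet together with the area normalization $\abs{\Sigma}=4\pi$ and the Lipschitz dependence of eigenvalues, it shows $\abs{k-1}\le 3\epsilon d$ and $k>0$, so that $\II_p$ is close to $\tn{Id}$; it then propagates to obtain the stronger global statement $\norm{\II-\tn{Id}}_\infty\lesssim\epsilon$, from which the trace-free bound follows by triangle inequality.

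Your route avoids the Gauss--Bonnet step and the sign argument entirely, at the price of not obtaining the intermediate information that $\II$ itself is globally close to the identity (only its trace-free part is controlled). For the proposition as stated that extra information is not needed, so your version is a legitimate and somewhat more economical proof. Your cautionary remark about needing $\epsilon$ small enough for Lemma \ref{lem: diameter bound} to apply is exactly right and matches the paper's use of that lemma.
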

\begin{proof}
	Again, let $p$ be an umbilical point of $\Sigma$ with principal curvatures $k_1=k_2=k\in 
	\R$, and suppose $\epsilon$ is sufficiently small so as to ensure that $3\epsilon d<1/2$ (which is possible precisely by Lemma \ref{lem: diameter bound}). Reasoning as in Lemma \ref{lem: diameter bound}, we observe that if $\abs{k}>1+3\epsilon d$, then the principal curvatures are bounded below in absolute value by $1+\epsilon d$, so in particular the Gauss curvature $K$ is everywhere strictly greater than $1$. However, by the Gauss-Bonnet theorem,
	\eq{
		\int_\Sigma K=4\pi,
	}
	from which we obtain a contradiction. If $|k|<1-3\epsilon d$, we deduce a similar contradiction. Therefore, at the umbilical point $p$, we have that $\abs{\abs{k}-1}\leq 3\epsilon d$, and in fact $k$ must also necessarily be positive since otherwise the mean curvature would be negative everywhere on $\Sigma$, which is impossible for embedded surfaces. Therefore, at $p$, the second fundamental form $\II_p$ satisfies $\abs{\II_p-\tn{Id}}<3\epsilon d$. Using this along with the global diameter bound and the assumption that $\abs{\nabla \II}<\epsilon$, it follows that
	\eq{
		\norm{\II-H\cdot \tn{Id}}_\infty\leq \norm{\II-\tn{Id}}_\infty +\norm{H-1}_\infty\leq \tilde C \epsilon
	}
	for some constant $\tilde C$ independent of $\Sigma$, as desired.
\end{proof}

We are now finally ready to prove the main theorem:

\begin{proof}[Proof of Theorem \ref{thm: euclidian stability}]
	By Proposition \ref{prop: gohberg to geometric}, we have that $\norm{\nabla \II}_\infty<C_3\epsilon$ (here $C_3$ is the dimensional constant from the conclusion of Proposition \ref{prop: gohberg to geometric} in ambient dimension $n=3$) and, taking $\epsilon$ sufficiently small, we may then apply Proposition \ref{prop: nearly umbilical} to deduce that
	\eq{
		\norm{\II- H\cdot \tn{Id}}_2\leq \tilde C\epsilon
	}
	since the area of $\partial \Omega$ is assumed to be $4\pi$, so the $L^\infty$ bound may be converted to an $L^2$ bound up to a constant. We now apply Theorem \ref{thm: DLM} and the result immediately follows.
\end{proof}

We still expect some form of Theorem \ref{thm: euclidian stability} to hold in higher dimensions and for broader topologies, and pose this as an open problem below.

\begin{opr}
\label{opr: other dims}
	Let $\Omega\subset \R^{n\geq 4}$ be a bounded domain with smooth boundary of $(n-1)$-volume $1$. Show that for sufficiently small $\epsilon >0$, if
	\eq{
		\norm{[\Lambda,\Delta_{\partial\Omega}]}_{H^1(\partial\Omega)\to L^2(\partial\Omega)}<\epsilon,
	}
	the $\Omega$ is close to the ball in the appropriate sense.

\end{opr}

\vspace{5em}

\section{Stability among manifolds conformal to the ball}
\label{sec: potential}

We now turn our attention to the class of manifolds which are conformal to the unit ball, and investigate Question I and Question II in this setting. As discussed in the introduction, this problem leads to the study of the Dirichlet-to-Neumann map $\Lambda_q$ with respect to the interior extension operator $\Delta+q$. The radiality of the potential $q$ is precisely the property crucial to answering our questions. As a first step to establishing this, we answer Question I below.

\begin{prop}
\label{prop: q1 for potentials}
	The operator $\Lambda_q$ is a function of the boundary Laplacian $\Delta_{S^{n-1}}$ if and only if $q$ is radial.
\end{prop}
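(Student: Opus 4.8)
My plan is to translate ``$\Lambda_q$ is a function of $\Delta_{\S^{n-1}}$'' into an $SO(n)$-equivariance statement and then exploit how the Dirichlet-to-Neumann map transforms under rotations of the ball. Write $L^2(\S^{n-1})=\bigoplus_{\ell\geq 0}\mc{H}_\ell$ for the decomposition into spherical harmonics, where $\mc{H}_\ell$ is the eigenspace of $\Delta_{\S^{n-1}}$ for the eigenvalue $\ell(\ell+n-2)$; these eigenvalues are pairwise distinct, and for $n\geq 3$ the spaces $\mc{H}_\ell$ are pairwise non-isomorphic irreducible representations of $SO(n)$. The backbone of the argument is the equivalence: $\Lambda_q=f(\Delta_{\S^{n-1}})$ for some $f$ \emph{if and only if} $\Lambda_q$ commutes with the $SO(n)$-action, i.e.\ $\Lambda_q(h\circ R)=(\Lambda_q h)\circ R$ for all $h$ and all $R\in SO(n)$. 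The forward implication is immediate from the functional calculus, since the round Laplacian is $SO(n)$-invariant and hence so is any function of it. The converse is Schur's lemma: an $SO(n)$-equivariant operator must leave each $\mc{H}_\ell$ invariant and act on it by a scalar $c_\ell$, which is real because $\Lambda_q$ is self-adjoint, so one may take any $f$ with $f(\ell(\ell+n-2))=c_\ell$. Since $\Lambda_q$ is only a priori bounded $H^{1/2}(\S^{n-1})\to H^{-1/2}(\S^{n-1})$, I would run this step on the dense subspace of finite spherical-harmonic sums, where $\Lambda_q$ maps into $C^\infty(\S^{n-1})$, and then extend by continuity.

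For the ``if'' direction, assume $q$ is radial. Then $\Delta+q$ commutes with the $SO(n)$-action on $C^\infty(\B^n)$, and each $R\in SO(n)$ preserves $\S^{n-1}$ and commutes with $\partial_r$; hence whenever $u$ solves $(\Delta+q)u=0$ with $u|_{\S^{n-1}}=h$, the function $u\circ R$ solves the same equation with boundary value $h\circ R$ and normal derivative $(\Lambda_q h)\circ R$. Thus $\Lambda_q$ is $SO(n)$-equivariant, and the equivalence above produces the desired $f$.

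For the ``only if'' direction, assume $\Lambda_q=f(\Delta_{\S^{n-1}})$, so $\Lambda_q$ is $SO(n)$-equivariant. The same computation as above, now without assuming radiality, gives the transformation rule $\Lambda_{q\circ R}(h\circ R)=(\Lambda_q h)\circ R$ for every $R\in SO(n)$; combining this with the equivariance of $\Lambda_q$ yields $\Lambda_{q\circ R}=\Lambda_q$ for all $R\in SO(n)$. Since conjugation by the $L^2$-isometry $u\mapsto u\circ R$ carries $\Delta+q$ to $\Delta+q\circ R$, the potential $q\circ R$ is again admissible (no Dirichlet eigenvalue at $0$), so I may apply the global uniqueness theorem for the Calder\'on problem for the Schr\"odinger equation $\Delta+q$ in dimension $n\geq 3$ to conclude $q\circ R=q$ for all $R\in SO(n)$, i.e.\ $q$ is radial.

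The one substantial ingredient is this final appeal to identifiability of the potential from $\Lambda_q$, and it is also the place where $n\geq 3$ genuinely enters, just as it does for the irreducibility of the spherical-harmonic representations. The remaining steps are representation theory of $SO(n)$ together with the elementary covariance of the Dirichlet-to-Neumann map under rotations of $\B^n$, and I anticipate no difficulties there; the only real care needed is the domain issue in the Schur-lemma step noted above.
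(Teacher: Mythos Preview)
Your proposal is correct and follows essentially the same route as the paper: both directions hinge on the equivalence between $\Lambda_q$ being a function of $\Delta_{\S^{n-1}}$ and $\Lambda_q$ being $SO(n)$-equivariant (via Schur's lemma and irreducibility of the $\mc{H}_\ell$), combined with the rotation covariance $\Lambda_q=(R^{-1})^*\Lambda_{q\circ R}R^*$ and the uniqueness theorem of Sylvester--Uhlmann for the Schr\"odinger Calder\'on problem. Your write-up is in fact somewhat more careful than the paper's---you make explicit why the $\mc{H}_\ell$ are preserved (pairwise non-isomorphism), note self-adjointness to get real scalars, and flag the $H^{1/2}\to H^{-1/2}$ domain issue---but the argument is the same.
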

\begin{proof}
	If $q$ is radial, then $\Lambda_q$ commutes with all actions of $SO(n)$ on $\S^{n-1}$. But each eigenspace of the Laplacian on the $(n-1)$ sphere is an irreducible representation of $SO(n)$, and so a standard application of Schur's Lemma implies that $\Lambda_q$ is a constant multiple of the identity on each eigenspace, and thus a function of $\Delta_{\S^{n-1}}$.
	
	Conversely, suppose $\Lambda_q=f(\Delta_{\S^{n-1}})$ for some function $f\colon \R\to \R$. Then, since $\Delta_{\S^{n-1}}$ commutes with all actions of $SO(n)$, so does $\Lambda_q$. But, crucially, a simple change of variables shows
	\eq{
	\label{eq: commute dton with rotation}
		\Lambda_{q}=(R^{-1})^*\Lambda_{q\circ R} R^*\quad
	}
	for any $R\in SO(n)$, so our assumption in fact implies that $\Lambda_q=\Lambda_{q\circ R}$ for every $R\in SO(n)$. From this follows that $q=q\circ R$ for every $R\in SO(n)$ (by the well-known solution of the Calder\'on problem in the conformal case \cite{SU87}), hence $q$ is radial.
\end{proof}

We now seek to establish Theorem \ref{thm: conf q1 stability}, a stability result corresponding to Proposition \ref{prop: q1 for potentials}. To this end, we employ the following well known result, which gives a stability estimate to the Calder\'on problem in the case of manifolds conformal to Euclidian domains, by applying the CGO techniques developed in \cite{SU87}:

\begin{thm}[cf. Proposition 1 in \cite{A88}] 
\label{thm: calderon stability}
	Suppose that $\norm{q_i}_{H^s(\B^n)}<M$, $i=1,2$, with $s>n/2$ and $M>0$. Then,
	\eq{
		\norm{q_1-q_2}_{L^2(\B^n)}\leq C \cdot \omega \b{\,\norm{\Lambda_{q_1}-\Lambda_{q_2}}_*},
	}
	where $C$ and $\omega$ are as in Theorem \ref{thm: conf q1 stability}.
\end{thm}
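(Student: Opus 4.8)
The plan is to reduce the statement to the standard stability estimate for the full Calder\'on problem on Euclidean domains and then transfer it to the ball with a conformal factor vanishing near the boundary. First I would recall that, since $\phi$ (equivalently $\log\gamma$) vanishes in a neighborhood of $\S^{n-1}$, the Dirichlet-to-Neumann map $\Lambda_{q_i}$ associated to the Schr\"odinger operator $\Delta+q_i$ on $\B^n$ coincides with the Dirichlet-to-Neumann map of the conductivity equation $\nabla\cdot(\gamma_i\nabla\cdot)$, and hence extends to the Dirichlet-to-Neumann map of the anisotropic-free, isotropic conductivity $\gamma_i$ on the Euclidean ball (this is exactly the chain of reductions recalled in the introduction). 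So one may invoke the boundary determination/interior reconstruction stability machinery of Sylvester--Uhlmann \cite{SU87} as quantified by Alessandrini \cite{A88}.

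The core of the argument is then the CGO construction of \cite{SU87}: for each $q_i$ with $\|q_i\|_{H^s(\B^n)}<M$, $s>n/2$, one builds complex geometric optics solutions $u_i = e^{\zeta_i\cdot x}(1+r_i)$ with $\zeta_i\in\C^n$, $\zeta_i\cdot\zeta_i=0$, $|\zeta_i|$ large, and $\|r_i\|_{L^2}\lesssim |\zeta_i|^{-1}$, where the implied constant depends only on $M,s,n$ (here the hypothesis $s>n/2$ gives the needed $L^\infty$ control of $q_i$, and the $H^s$ bound makes all constants uniform). Pairing these solutions against the bilinear form $\int_{\B^n}(q_1-q_2)u_1u_2$ and using the integration-by-parts identity relating this integral to $\langle(\Lambda_{q_1}-\Lambda_{q_2})u_1|_{\S^{n-1}},\,u_2|_{\S^{n-1}}\rangle$ yields, after letting $\Im\zeta_i$ range over a ball in frequency space, a pointwise bound on the Fourier transform $\widehat{(q_1-q_2)}(\xi)$ of the form $|\widehat{(q_1-q_2)}(\xi)|\lesssim \|\Lambda_{q_1}-\Lambda_{q_2}\|_* e^{c|\xi|} + (\text{lower order in }|\zeta|)$, valid for $|\xi|\le \tau$ with $\tau\sim|\zeta|$. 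Splitting the $L^2$ norm of $q_1-q_2$ into low frequencies $|\xi|\le\tau$ and high frequencies $|\xi|>\tau$, estimating the former by the above and the latter by $\tau^{-s}\|q_1-q_2\|_{H^s}\lesssim M\tau^{-s}$, and optimizing over $\tau$ — choosing $\tau\sim |\log\|\Lambda_{q_1}-\Lambda_{q_2}\|_*|$ — produces the logarithmic modulus $\omega(t)=|\log t|^{-\delta}$ with $\delta$ depending only on $n$ (indeed $\delta$ comes from the interplay of the polynomial loss $\tau^{-s}$ and the exponential gain $e^{c\tau}$, and can be taken as any exponent $<1$). This is precisely the content of Proposition 1 of \cite{A88}, so I would present the argument at the level of citing that proof and indicating how the constants $C=C(M,s,n)$ and $\delta=\delta(n)$ arise, rather than reproving the CGO estimates in full.

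The main obstacle — or rather the point requiring care — is \emph{uniformity} of all constants in $M$ and $s$: one must check that every step (the solvability of the $\bar\partial$-type / Faddeev-Green's-function equation for $r_i$, the $L^2$ remainder bound, the boundary pairing identity, and the frequency truncation) depends on $q_i$ only through $\|q_i\|_{H^s}$, which is where $s>n/2$ is essential (it gives $\|q_i\|_{L^\infty}\lesssim M$ by Sobolev embedding and hence a uniform bound on the Faddeev resolvent). A secondary technical point is making sure the reduction from the ball-with-conformal-factor back to a genuine Euclidean conductivity problem is legitimate; this uses crucially that $\phi$, hence $\gamma=e^{(n-2)\phi}$, is identically $1$ near $\S^{n-1}$, so $\gamma$ extends smoothly by $1$ and no boundary-recovery step is needed. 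Once these uniformities are in hand the estimate follows by quoting \cite{A88}, and this is exactly the form in which Theorem \ref{thm: conf q1 stability} will later apply it (taking $q_1=q$ and $q_2=Pq$ after showing $\Lambda_{Pq}=f(\Delta_{\S^{n-1}})$).
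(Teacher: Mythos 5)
The paper treats this statement as a pure black-box citation of Proposition 1 in \cite{A88} and supplies no proof, and your proposal does essentially the same thing while correctly sketching the underlying Sylvester--Uhlmann/Alessandrini argument (CGO solutions, the boundary pairing identity, a low-frequency Fourier bound with exponential loss $e^{c\tau}$ in the large parameter $\tau\sim|\zeta|$ --- note this is $e^{c\tau}$, not $e^{c|\xi|}$ as written, though the subsequent optimization $\tau\sim|\log\|\Lambda_{q_1}-\Lambda_{q_2}\|_*|$ is consistent with the correct version --- plus the $\tau^{-s}$ high-frequency tail and logarithmic balancing). Your emphasis on uniformity of all constants in $\|q_i\|_{H^s}$ via Sobolev embedding with $s>n/2$ is precisely what makes the cited result applicable in the form stated, so this matches the paper's approach.
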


Recall from the introduction that $P$ is the projection onto radial functions, as defined in Equation \ref{eq: radial projection}, and $\norm{-}_*$ denotes the norm of an operator as a map from $H^{1/2}(\S^{n-1})\to H^{-1/2}(\S^{n-1})$.

\begin{proof}[Proof of Theorem \ref{thm: conf q1 stability}]
	Following the observation in Equation (\ref{eq: commute dton with rotation}), we note that for any $R\in SO(n)$,
	\eq{
		\Lambda_{q\circ R}=R^*\Lambda_{q} (R^{-1})^*=f(\Delta_{\S^n})+R^*E(R^{-1})^*.
	}
	We directly apply Theorem \ref{thm: calderon stability} and deduce
	\eq{
		\norm{q-q\circ R}_{L^{2}(\B^n)}\leq C\cdot\omega\b{\norm{\Lambda_q-\Lambda_{q\circ R}}_*}=C\cdot\omega\b{\norm{E-R^*E(R^{-1})^*}_*}.
	}
	Now, to address the right hand side, we note that $\omega$ is subadditive when the input is sufficiently small, and therefore
	\eq{
		\omega\b{\norm{E-R^*E(R^{-1})^*}_*}\leq 2\,\omega(\,\norm{E}_*).
	}
	We with to relate the left hand side to $Pq$. Integrate the identity
	\eq{
		\norm{q-R^*q}_2^2=\norm{q}_2^2+\norm{R^*q}_2^2-2\ip{q,R^*q}
	}
	over all $R\in SO(n)$ to obtain
	\eq{
		\int_{SO(n)} \norm{q-R^*q}_2^2\d H = 2\norm{q}_2^2-2\ip{q,Pq}=2\norm{q-Pq}_2^2,
	}
	where $d H$ denotes the normalized Haar measure on $SO(n)$ and the last step follows from the fact that $P$ is an orthogonal projection. We therefore conclude that 
	\eq{
		\norm{q-Pq}_2\leq C\cdot  \omega(\,\norm{E}_*),
	}
	as desired.
\end{proof}

Next, we turn our attention to Question II. In the conformal setting, this question is significantly more subtle. Our main result here is a characterization of the those potentials which infinitesimally preserve the commutativity property. To this end, we begin with the following lemma, which is a refinement of the well known fact that the span of the product of harmonic functions is dense in the ball.

\begin{lem}
\label{lem: radial iff integrate to zero against pairs}
	A function $q\in C^\infty_c(\B^n)$ is radial if and only if
	\eq{
	\label{eq: integral against harmonics}
		\int_{\B^n} q\cdot uv \d x=0
	}
	for every pair $u, v$ of homogeneous harmonic polynomials in $\R^n$ of distinct degree.
\end{lem}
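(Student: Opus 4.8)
The plan is to prove both directions, with the "only if" direction being essentially trivial and the "if" direction requiring the real work. First, suppose $q$ is radial. Then $q(x) = \rho(|x|)$ for some function $\rho$, and for a pair of homogeneous harmonic polynomials $u, v$ of degrees $k \neq \ell$, I would pass to polar coordinates $x = r\theta$ with $r > 0$, $\theta \in \S^{n-1}$, writing $u(x) = r^k Y_k(\theta)$, $v(x) = r^\ell Y_\ell(\theta)$ where $Y_k, Y_\ell$ are the restrictions to the sphere (spherical harmonics). The integral factors as
\eq{
	\int_{\B^n} q\, uv \d x = \left(\int_0^1 \rho(r)\, r^{k+\ell}\, r^{n-1} \d r\right)\left(\int_{\S^{n-1}} Y_k(\theta) Y_\ell(\theta) \d\sigma(\theta)\right),
}
and the angular factor vanishes because spherical harmonics of distinct degree are $L^2(\S^{n-1})$-orthogonal (they lie in distinct eigenspaces of $\Delta_{\S^{n-1}}$, which is self-adjoint). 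This gives \eqref{eq: integral against harmonics}.

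For the converse, the key is to exploit the $SO(n)$-action and reduce to a density statement. Suppose \eqref{eq: integral against harmonics} holds for all harmonic polynomial pairs of distinct degree. I want to show $q = Pq$, equivalently that $q$ is orthogonal (in $L^2(\B^n)$) to the orthogonal complement of the radial functions. Decompose $L^2(\B^n)$ via separation of variables: $L^2(\B^n) = \bigoplus_{k \geq 0} \left( L^2((0,1), r^{n-1}\d r) \otimes \mathcal{H}_k \right)$ where $\mathcal{H}_k \subset L^2(\S^{n-1})$ is the space of degree-$k$ spherical harmonics, and the radial functions are exactly the $k=0$ component. So it suffices to show that the component of $q$ in each $k \geq 1$ block vanishes. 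Writing $q_k(r,\theta)$ for that component and expanding $q_k(r,\theta) = \sum_j c_{k,j}(r) Y_{k,j}(\theta)$ over an orthonormal basis $\{Y_{k,j}\}$ of $\mathcal{H}_k$, I need each $c_{k,j} \equiv 0$. Testing $q$ against $u(x) = r^k Y_{k,j}(\theta)$ and $v(x) = r^\ell Y_\ell(\theta)$ only sees products of harmonics; the issue is that these products $r^{k+\ell} Y_k Y_\ell$ with $k \neq \ell$ may not span all of $\bigoplus_{m \geq 1}(\text{radial}) \otimes \mathcal{H}_m$ — one must check the radial weights $r^{k+\ell}$ and the Clebsch–Gordan content of $Y_k Y_\ell$ are rich enough.

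The main obstacle, and the heart of the argument, is precisely this density/spanning claim: I must show that products $uv$ of harmonic polynomials of distinct degrees span a dense subspace of the non-radial part of $L^2(\B^n)$. The strategy would be: (i) for a fixed target degree $m \geq 1$, show that by choosing $k, \ell$ with $|k - \ell| = m$ (or more generally $k + \ell \equiv m$ appropriately and using Clebsch–Gordan / the decomposition of $\mathcal{H}_k \otimes \mathcal{H}_\ell$ into irreducibles), every spherical harmonic $Y_m \in \mathcal{H}_m$ arises in the expansion of some product $Y_k Y_\ell$ with a nonzero coefficient — this is a representation-theoretic fact about $SO(n)$ ($\mathcal{H}_m$ appears in $\mathcal{H}_k \otimes \mathcal{H}_\ell$ whenever $|k-\ell| \le m \le k+\ell$ and $k+\ell-m$ is even); (ii) handle the radial weights: the products available give monomials $r^{k+\ell}$ for a sufficiently rich set of exponents $k + \ell$ (all integers $\geq m$ of the correct parity, with $m$ fixed we can slide $k, \ell$), and $\{r^{k+\ell}\}$ spans a dense subspace of $L^2((0,1), r^{n-1}\d r)$ by a Müntz–Szász type argument or simply Stone–Weierstrass since $q$ is smooth and compactly supported. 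Combining, the annihilation conditions force $c_{k,j} \equiv 0$ for all $k \geq 1$, i.e. $q$ is radial. I expect the representation-theoretic input — that the relevant $\mathcal{H}_m$ really does appear with nonzero multiplicity and that the projection of $Y_k Y_\ell$ onto it is not identically zero as $Y_k, Y_\ell$ range over their spaces — to be the step requiring the most care; an efficient route may be to invoke the classical fact (e.g. via the density of products of harmonic functions used in Calderón-problem arguments, as hinted in the lemma's preamble) and then bootstrap the distinct-degree refinement by a parity/degree-counting argument.
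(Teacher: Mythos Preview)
Your forward direction matches the paper's exactly. For the converse, your route is genuinely different. You attack the problem head-on via the decomposition $L^2(\B^n)=\bigoplus_{m\geq 0}L^2_{\tn{rad}}\otimes\mc{H}_m$ and aim to show that the products $uv$ with $\deg u\neq\deg v$ are dense in the non-radial part $\bigoplus_{m\geq 1}$; this reduces to (i) a Clebsch--Gordan type statement that the pointwise-product map $\mc{H}_k\times\mc{H}_\ell\to\mc{H}_m$ is onto for every $m$ in the range $|k-\ell|\leq m\leq k+\ell$ with $m\equiv k+\ell\pmod 2$, together with (ii) a M\"untz/moment argument for the radial factor. This is a viable strategy --- surjectivity in (i) follows from Schur's lemma once one checks the map is nonzero, and (ii) is straightforward since $q\in C^\infty_c$ --- but you correctly flag (i) as the delicate step and have not actually carried it out; in higher dimensions the nonvanishing of the relevant projections is a classical but nontrivial fact about products of spherical harmonics that you would need to cite or prove. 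The paper instead proceeds by \emph{induction on the dimension}: the base case $n=2$ is a direct Fourier/moment computation on the disc (essentially your approach in its easiest instance), and the inductive step integrates $q$ along lines orthogonal to a hyperplane $P$ to obtain $q_P$, observes that testing $q$ against pullbacks to $\R^{n+1}$ of harmonic polynomials on $P$ reproduces the hypothesis one dimension down, concludes $q_P$ is radial for every hyperplane $P$, and finishes with the Fourier projection--slice theorem. The paper's argument sidesteps the representation theory entirely and is fully self-contained; your route, if completed, would be more direct and perhaps more transparent about why the distinct-degree condition is exactly what is needed, at the cost of importing or reproving the product-of-spherical-harmonics decomposition.
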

\begin{proof}
	Suppose $q$ is radial. Recall that homogenous harmonic polynomials in $\R^n$ restrict to eigenfunctions of the spherical Laplacian on $\S^{n-1}$, and that the polynomial degree corresponds to the eigenvalue of the restricted function. Therefore, after rewriting the integral in spherical coordinates and applying the orthogonality of spherical harmonics of different eigenvalue, Equation \ref{eq: integral against harmonics} follows.

	For the implication in the other direction, we induct on the dimension. When $n=2$, view $\B^2=\mathbb{D}\subset \C$ as the unit disc in the complex plane. Our integration assumption is equivalent to the fact that
	\eq{
		\int_\mathbb{D} q z^k\overline{z}^\ell\d A=0
	}
	for every pair of \textit{distinct} integers $k,\ell\geq 0$. Decomposing $q$ radially into its Fourier series as $q=\sum_{j\in \Z}a_j(r)e^{ij\theta}$, we deduce that
	\aeq{
		0=\sum_{j\in \Z}\int_{\theta=0}^{2\pi} \int_{r=0}^1a_j(r)r^{k+\ell +1}e^{i(j+k-\ell)\theta}\d r\d \theta=\int_0^1a_{\ell-k}(r)r^{k+\ell+1}\d r.
	}
	It follows that all sufficiently high moments of $a_j$, $j\neq 0$, vanish, which in turn implies that $a_j\equiv 0$ when $j\neq 0$ so $q$ is radial.
	
	We now proceed with our induction and suppose that the result holds for some $n\geq 2$. Writing $\R^{n+1}=\R^n\times \R$ with corresponding coordinates $(x,y)$, let $P=\{y=0\}$ denote the horizontal hyperplane. Extending $q$ by zero outside of the ball, define
	\eq{
		q_P(x)=\int_{-\infty}^\infty q(x,y)\d y. 
	}
	Now, observe that for any pair $u,v\in \C^\infty (\R^n)$ of homogeneous harmonic polynomials of different degree,
	\eq{
		\int_{\R^n} q_P \cdot uv\d x= \int_{\R^{n+1}} q \cdot \tilde u\tilde v\d x \d y,
	}
	where $\tilde u(x,y)\coloneqq u(x)$ and similarly for $\tilde v$. Note that $\tilde u$ and $\tilde v$ themselves form a pair of homogeneous harmonic polynomials of different degree, now in $\R^{n+1}$. Conclude after applying our inductive hypothesis that $q_P$ is radial. This reasoning in fact holds for any hyperplane $P$, with $q_P$ the function obtained by integrating $q$ vertically over $P$.
	
	To conclude the argument, we apply the Fourier projection-slice theorem. Since $q_P$ is radial, so is $\widehat{q_P}$, which is exactly the restriction of $\widehat{q}$ to the plane $P$. It follows that $\widehat{q}$, and hence $q$, is radial.
\end{proof}

We now apply this result to conclude the promised theorem.

\begin{proof}[Proof of Theorem \ref{thm: conformal inf q2}]
	Fix $f,g\in C^\infty(\S^{n-1})$ and let $u, v_t\in C^\infty (\B^n)$ be the solutions to
	\eq{
	\begin{cases}
		\Delta u=0\\
		u=f \tn{ on }\S^{n-1}
	\end{cases}
	\quad\tn{and}\qquad
	\begin{cases}
		(\Delta +q_t)v_t=0\\
		v_t=g \tn{ on }\S^{n-1}
	\end{cases},
	}
	respectively. By Green's formula, we find that
	\eq{
		\ip{(\Lambda_{q_t}-\Lambda_0)f,g}_{\S^{n-1}}=\int_{\B^n} q_t u v_t,
	}
	which we differentiate at $t=0$ to obtain
	\eq{
		\ip{\Lambda' f,g}_{\S^{n-1}}=\int_{\B^n} q' u v_0.
	}
	Now, $\Lambda'$ commutes with $\Delta_{\S^{n-1}}$ if and only if $\ip{\Lambda' f,g}_{\S^{n-1}}=0$ whenever $f$ and $g$ are spherical harmonics with distinct eigenvalues. It follows that $q'$ integrates to zero against every pair of homogenous harmonic polynomials of different degree. The result thus immediately follows from Lemma \ref{lem: radial iff integrate to zero against pairs}.
\end{proof}

We conclude this section by stating as an open problem our expectation that Theorem \ref{thm: conformal inf q2} ought to hold for any potential (and thus conformal factor), not just those close to zero.

\begin{opr}
\label{opr: commute iff radial}
	Show that the Dirichlet-to-Neumann map associated to $(\B^n, g_\phi)$ commutes with the Laplacian on $\S^{n-1}$ if and only if $\phi$ is radial.
\end{opr}

\vspace{5em}

\appendix

\section{Appendix: Gohberg's Lemma}
\label{sec: Gohberg lemma}

Gohberg's Lemma, which serves as the starting step of the analysis of Section \ref{sec: euclidian}, provides a bound on the symbol of a pseudodifferential operator in terms of its norm. Throughout this section, we take $(M,g)$ to be a closed Riemannian manifold and denote by $\Psi^k_\tn{cl}(M)$ the space of classical pseudodifferential operators of order $k$ on $M$ (for the technical definition of this space and, more generally, an introduction to microlocal analysis, consult Chapter 18 of \cite{H85}).

Let $A\in \Psi^0_\tn{cl}(M)$ and denote its principal symbol by $\sigma_0(A)$. Observe that while $\sigma_0(A+B)=\sigma_0(A)$ for any $B\in \Psi^{-1}_\tn{cl}$, the norm $\norm{A+B}_2$ may be made arbitrarily large since smoothing operators have arbitrary norm on $L^2(M)$. However, one may hope for a lower bound on $\norm{A+B}_2$ in terms of the principal symbol, since $A$ cannot be ``corrected'' by a lower order operator to have arbitrarily small norm. Noting that all negative order pseudodifferential operators are compact by the Sobolev embedding theorem, this intuition leads precisely to Gohberg's lemma:

\begin{lem}[Gohberg's lemma]
\label{lem: Gohberg}
	For $A\in \Psi^0_\tn{cl}(M)$ we have
	\eq{
		\norm{\sigma_0(A)}_\infty\leq \norm{A}_\tn{ess},
	}
	where $\norm{A}_\tn{ess}$ is the \tn{essential norm} defined by
	\eq{
		\norm{A}_\tn{ess}\coloneqq \inf\{\,\norm{A-K}_2\tn{among all compact operators $K:L^2(M)\to L^2(M)$}\}.
	 }
\end{lem}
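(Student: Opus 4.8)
The plan is to prove the inequality pointwise in the cotangent bundle: fix a point $(x_0,\xi_0)\in S^*M$ and show that $|\sigma_0(A)(x_0,\xi_0)|\le\norm{A}_{\mathrm{ess}}$. Taking the supremum over $(x_0,\xi_0)$ then yields the claim, since $\sigma_0(A)$ is homogeneous of degree $0$ and hence determined by its values on the cosphere bundle. To get the pointwise bound I would construct a sequence of test functions $u_j\in C^\infty(M)$ which are, in the high-frequency limit, ``concentrated'' at $(x_0,\xi_0)$ in phase space — the standard device being a coherent-state / WKB wave packet of the form $u_j(x)=\chi(x)\,e^{i\lambda_j\varphi(x)}a_j(x)$ with $\lambda_j\to\infty$, where $\varphi$ is a real phase with $d\varphi(x_0)=\xi_0$ (for instance $\varphi(x)=\langle x-x_0,\xi_0\rangle$ in a coordinate chart), $\chi$ a cutoff near $x_0$, and $a_j$ a suitably rescaled bump (width $\sim\lambda_j^{-1/2}$) normalized so that $\norm{u_j}_2=1$.

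The two properties I need from this sequence are: first, $u_j\rightharpoonup 0$ weakly in $L^2(M)$ (which holds because the $u_j$ oscillate faster and faster, so $\langle u_j,w\rangle\to 0$ for any fixed $w\in L^2$ by stationary phase / Riemann–Lebesgue); and second, the symbolic localization $\norm{A u_j-\sigma_0(A)(x_0,\xi_0)\,u_j}_2\to 0$ as $j\to\infty$. The second is the heart of the computation: applying a pseudodifferential operator to such a wave packet, the leading-order action is multiplication by the principal symbol evaluated along the packet, i.e. essentially at $(x_0,\xi_0)$, with the remainder controlled by derivatives of the symbol times the width of the packet plus the contribution of the subprincipal and lower-order terms, all of which are $o(1)$ after the normalization. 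This is a routine oscillatory-integral / stationary-phase estimate, but it is the step that must be done carefully.

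Granting these two facts, the argument closes quickly. For any compact $K\colon L^2(M)\to L^2(M)$, weak convergence $u_j\rightharpoonup 0$ forces $\norm{Ku_j}_2\to 0$. Hence
\eq{
	|\sigma_0(A)(x_0,\xi_0)|=\lim_{j\to\infty}\norm{\sigma_0(A)(x_0,\xi_0)\,u_j}_2=\lim_{j\to\infty}\norm{Au_j}_2\le\lim_{j\to\infty}\b{\norm{(A-K)u_j}_2+\norm{Ku_j}_2}\le\norm{A-K}_2,
}
using $\norm{u_j}_2=1$ and the symbolic localization in the middle equality. Taking the infimum over all compact $K$ gives $|\sigma_0(A)(x_0,\xi_0)|\le\norm{A}_{\mathrm{ess}}$, and taking the supremum over $(x_0,\xi_0)\in S^*M$ completes the proof.

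I expect the main obstacle to be the careful construction of the wave packets on a manifold and the verification of the symbolic localization estimate $\norm{Au_j-\sigma_0(A)(x_0,\xi_0)u_j}_2\to 0$ uniformly enough: one must track how the $\lambda_j^{-1/2}$ width interacts with the symbol expansion of $A$ in local coordinates, patch with a partition of unity, and confirm that the contributions of the order $-1$ and lower terms genuinely vanish in the limit rather than merely staying bounded. The weak-convergence claim and the final three-line estimate are comparatively soft. A secondary point worth a remark is that the same argument, applied to $A\in\Psi^k_{\mathrm{cl}}(M)$ after pre-composing with $(1+\Delta_M)^{-k/2}$ (or arguing directly with the $H^k\to L^2$ norm), yields the order-$k$ version recorded as Corollary~\ref{cor: Gohberg for any order} and used in Section~\ref{sec: euclidian}.
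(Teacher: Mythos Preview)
Your proposal is correct and follows essentially the same route as the paper: construct $L^2$-normalized oscillatory test functions concentrating at $(x_0,\xi_0)$ in phase space, verify the symbolic localization $Au_j\approx\sigma_0(A)(x_0,\xi_0)u_j$, and use weak convergence $u_j\rightharpoonup 0$ to kill compact perturbations. The only cosmetic differences are that the paper decouples the frequency and spatial scales (first sending $\lambda\to\infty$ with a fixed envelope in Proposition~\ref{prop: mult op limit}, then shrinking the envelope in Proposition~\ref{prop: coordinate norm bound}) rather than tying them via the coherent-state width $\lambda_j^{-1/2}$, and that it handles the passage to the manifold by replacing $A$ with an explicit partition-of-unity quantization $B$ with $A-B$ compact, thereby absorbing into the essential norm exactly the bookkeeping you flag as the main obstacle.
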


Results of this form originally appeared in \cite{G60}, but the modern literature does not seem to have an accessible account of the result. We therefore provide here a detailed exposition. We begin by investigating the model case in $\R^n$. To this end, let $a(x,\xi)\in C^\infty(\R^n\times (\R^n \setminus\{0\}))$ be compactly supported in $x$ and homogeneous of degree zero in $\xi$. Recall that its quantization $A=\tn{Op}(a)$ is initially defined for Schwartz functions $u\in \mc{S}(\R^n)$ by
\eq{
	Au(x)=\frac{1}{(2\pi )^n}\iint e^{i(x-y)\cdot \xi}a(x,\xi)u(y)\d y\d \xi
}
and extends by continuity to $L^2(\R^n)$.

\begin{prop}
\label{prop: mult op limit}
	Fix $\xi_0\in \R^n\setminus \{0\}$ and $f\in C^\infty_c(\R^n)$, and define $u_\lambda(x)=e^{i\lambda x\cdot \xi_0}f(x)$. Then
	\eq{
		\lim_{\lambda\to\infty}\norm{Au_\lambda-a(\cdot, \xi_0)u_\lambda}_2\to 0.
	}
\end{prop}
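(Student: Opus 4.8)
The plan is to compute $Au_\lambda$ explicitly on the Fourier side, factor out the oscillatory exponential $e^{i\lambda x\cdot\xi_0}$, and observe that the amplitude left behind is $a(x,\xi_0+\eta/\lambda)$, which converges pointwise (in both $x$ and $\eta$) to the $\eta$-independent symbol $a(x,\xi_0)$ as $\lambda\to\infty$. A dominated-convergence argument, exploiting that the error $Au_\lambda-a(\cdot,\xi_0)u_\lambda$ is supported in the fixed compact $x$-support of $a$, will then upgrade this to convergence in $L^2(\R^n)$.

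In detail, I would first note that, since $a$ is bounded (being homogeneous of degree zero and compactly supported in $x$) and $u_\lambda\in\mc S(\R^n)$, Fubini applies and $A$ can be written in Kohn--Nirenberg form, $Au(x)=(2\pi)^{-n}\int e^{ix\cdot\xi}a(x,\xi)\widehat u(\xi)\d\xi$. Using $\widehat{u_\lambda}(\xi)=\widehat f(\xi-\lambda\xi_0)$, the substitution $\xi=\lambda\xi_0+\eta$, and the homogeneity identity $a(x,\lambda\xi_0+\eta)=a(x,\xi_0+\eta/\lambda)$, one arrives at
\eqs{
Au_\lambda(x)=\frac{e^{i\lambda x\cdot\xi_0}}{(2\pi)^n}\int e^{ix\cdot\eta}\,a\b{x,\,\xi_0+\tfrac{\eta}{\lambda}}\widehat f(\eta)\d\eta .
}
Since $a(x,\xi_0)u_\lambda(x)=\tfrac{e^{i\lambda x\cdot\xi_0}}{(2\pi)^n}\int e^{ix\cdot\eta}a(x,\xi_0)\widehat f(\eta)\d\eta$ by Fourier inversion for $f$, and $\abs{e^{i\lambda x\cdot\xi_0}}=1$, this gives the pointwise bound
\eqs{
\abs{Au_\lambda(x)-a(x,\xi_0)u_\lambda(x)}\;\le\;G_\lambda(x):=\frac{1}{(2\pi)^n}\int\abs{a\b{x,\,\xi_0+\tfrac{\eta}{\lambda}}-a(x,\xi_0)}\,\abs{\widehat f(\eta)}\d\eta ,
}
with $\widehat f\in\mc S(\R^n)$, in particular $\widehat f\in L^1$.

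The last step is to show $\norm{G_\lambda}_2\to 0$, and this is where the only real care is needed. For each fixed $x$, the integrand tends to $0$ as $\lambda\to\infty$: because $\xi_0\neq 0$, the argument $\xi_0+\eta/\lambda$ eventually lies in $\R^n\setminus\{0\}$ where $a(x,\cdot)$ is continuous, and it is dominated by $2\norm{a}_\infty\abs{\widehat f(\eta)}\in L^1_\eta$ (writing $\norm{a}_\infty:=\sup_{x,\,\abs{\omega}=1}\abs{a(x,\omega)}<\infty$ by homogeneity and compact $x$-support), so $G_\lambda(x)\to 0$ by dominated convergence. Moreover $G_\lambda$ vanishes outside the fixed compact set $K=\mathrm{supp}_x a$ and obeys the uniform bound $G_\lambda\le (2\pi)^{-n}\cdot 2\norm{a}_\infty\norm{\widehat f}_1$, so $G_\lambda^2\le C\,\mathbf{1}_K\in L^1(\R^n)$; a second dominated-convergence argument then gives $\int G_\lambda^2\to 0$, whence $\norm{Au_\lambda-a(\cdot,\xi_0)u_\lambda}_2\le\norm{G_\lambda}_2\to 0$. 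Thus the computation itself is routine; the point that must not be skipped is that the error term has fixed compact support and is uniformly bounded, without which pointwise decay would not yield $L^2$ decay. The hypothesis $\xi_0\neq 0$ enters exactly here, to keep $\xi_0+\eta/\lambda$ bounded away from the origin as $\lambda\to\infty$.
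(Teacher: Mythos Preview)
Your proposal is correct and follows essentially the same route as the paper: both compute $Au_\lambda$ on the Fourier side, use the substitution $\xi\mapsto\lambda\xi_0+\eta$ together with degree-zero homogeneity to rewrite the amplitude as $a(x,\xi_0+\eta/\lambda)$, and then bound the difference pointwise by the same integral $G_\lambda(x)$. The only cosmetic difference is in the last step: the paper obtains \emph{uniform} smallness of $G_\lambda$ via an explicit $\epsilon/2$ split ($|\eta|\le R$ handled by uniform continuity of $a$, $|\eta|>R$ by the $L^1$ tail of $\widehat f$), whereas you invoke dominated convergence twice (once in $\eta$, once in $x$) to pass directly from pointwise decay to $L^2$ decay. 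Both arguments rely on the same two ingredients you correctly flagged---fixed compact $x$-support in $\mathrm{supp}_x a$ and the uniform bound $2\norm{a}_\infty\norm{\widehat f}_1$---so the proofs are interchangeable.
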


\begin{proof}
	The result is trivial if $f$ or $a$ is identically zero, so assume this is not the case. Compute first that
	\aeq{
		Au_\lambda(x)&=\frac{1}{(2\pi )^n}\int e^{ix\cdot \xi}a(x,\xi)\int e^{-iy\cdot (\xi-\lambda\xi_0)}f(y)\d y\d \xi\\
		&=\frac{1}{(2\pi )^n}\int e^{ix\cdot \xi}a(x,\xi)\widehat{f}(\xi-\lambda \xi_0)\d \xi\\
		&=e^{i\lambda x\cdot \xi_0}\frac{1}{(2\pi)^n}\int e^{ix\cdot \xi}a(x,\xi/\lambda+\xi_0)\widehat{f}(\xi)\d \xi,
	}
	where the last line is obtained by changing variables and applying the homogeneity property of $a$. Writing
	\eq{
		a(x,\xi_0)u_\lambda(x)=e^{i\lambda x\cdot \xi_0}\frac{1}{(2\pi)^n}\int e^{ix\cdot \xi}a(x,\xi_0)\widehat{f}(\xi)\d \xi
	}
	by taking the inverse Fourier transform of the Fourier transform of $f$, it follows that
	\eq{
	\label{eq:first bound}
	\abs{Au_\lambda(x)-a(x,\xi_0)u_\lambda(x)} \leq \frac{1}{(2\pi)^n}\int \abs{a(x,\xi/\lambda+\xi_0)-a(x,\xi_0)}\cdot \abs{\widehat{f}(\xi)}\d \xi.
	}
	We now show that the right hand side may be made uniformly arbitrarily small for $\lambda$ sufficiently large. To this end, let $\epsilon>0$ and note that since $f$ is nonzero, compactly supported and smooth, there exists $R>0$ such that
	\eq{
		\int_{\abs{\xi}\leq R} \abs{\widehat f(\xi)}\d \xi\neq 0\quad \tn{and}\quad\int_{\abs{\xi}>R} \abs{\widehat f(\xi)}\d \xi <\frac{\epsilon}{4\norm{a}_{\infty}}.
	}
	(Note that $0< \norm{a}_\infty<\infty$ since $a$ is nonzero, compactly supported in $x$ and homogeneous of degree zero in $\xi$.)
	Next, apply the uniform continuity of $a$ to pick $L$ sufficiently large so that for $\lambda>L$, we have for every $x$ and $\abs{\xi}\leq R$ that 
	\eq{
		|a\b{x,\xi/\lambda+\xi_0}-a(x,\xi_0)|<\epsilon\b{2\int_{\abs{\xi}\leq R} \abs{\widehat f(\xi)}\d \xi}^{-1}.
	}
	Conclude that for $\lambda>L$, the bound in Equation (\ref{eq:first bound}) becomes
	\aeq{
		(2\pi)^n\abs{A_\lambda u(x)-a(x,\xi_0)u_\lambda(x)}&<\int_{\abs{\xi}\leq R}\epsilon\b{2\int_{\abs{\xi}\leq R} \abs{\widehat f(\xi)}\d \xi}^{-1}\abs{\widehat f(\xi)}\d \xi\\
		&\qquad+\int_{\abs{\xi}>R}2\norm{a}_\infty\cdot \abs{\widehat f(\xi)}\d \xi\\
		&< \epsilon/2+\epsilon/2=\epsilon
	}
	as desired. The result now follows after noting that the support of $Au_\lambda-a(\cdot, \xi_0)u_\lambda$ is contained in a compact set independent of $\lambda$.
\end{proof}

We apply this proposition to obtain a lower bound on the norm of $A$ as an operator on $L^2(\R^n)$.

\begin{prop}
\label{prop: coordinate norm bound}
	With $A$ as above,
	\eq{
		\norm{a}_\infty\leq \norm{A}_2.
	}
\end{prop}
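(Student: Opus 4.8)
The plan is to show that the operator norm $\norm{A}_2$ is bounded below by $\abs{a(x_0,\xi_0)}$ for every $(x_0,\xi_0)$ with $\xi_0\neq 0$, from which the supremum bound $\norm{a}_\infty\le \norm{A}_2$ follows by taking a supremum over such points (recall $a$ is homogeneous of degree zero in $\xi$, so it suffices to range over $\abs{\xi_0}=1$, and it is continuous and compactly supported in $x$, so the supremum is attained). Fix such a point $(x_0,\xi_0)$ and $\epsilon>0$. The idea is to feed the operator the highly oscillatory test functions $u_\lambda(x)=e^{i\lambda x\cdot\xi_0}f(x)$ from Proposition \ref{prop: mult op limit}, where $f\in C^\infty_c(\R^n)$ is chosen to concentrate near $x_0$, and to exploit that $Au_\lambda$ is close in $L^2$ to the pointwise multiplication $a(\cdot,\xi_0)u_\lambda$.

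First I would choose $f$ appropriately: since $a(\cdot,\xi_0)$ is continuous, pick a small ball $B=B(x_0,\rho)$ on which $\abs{a(x,\xi_0)}>\abs{a(x_0,\xi_0)}-\epsilon$, and take $f\in C^\infty_c(B)$ nonzero. Then
\eq{
	\norm{a(\cdot,\xi_0)u_\lambda}_2^2=\int_B \abs{a(x,\xi_0)}^2\abs{f(x)}^2\d x\geq (\abs{a(x_0,\xi_0)}-\epsilon)^2\norm{f}_2^2,
}
and note $\norm{u_\lambda}_2=\norm{f}_2$ for every $\lambda$ since $\abs{e^{i\lambda x\cdot\xi_0}}=1$. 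By Proposition \ref{prop: mult op limit} there is $\lambda$ large enough that $\norm{Au_\lambda-a(\cdot,\xi_0)u_\lambda}_2<\epsilon\norm{f}_2$. The reverse triangle inequality then gives
\eq{
	\norm{A}_2\geq \frac{\norm{Au_\lambda}_2}{\norm{u_\lambda}_2}\geq \frac{\norm{a(\cdot,\xi_0)u_\lambda}_2-\epsilon\norm{f}_2}{\norm{f}_2}\geq \abs{a(x_0,\xi_0)}-2\epsilon.
}
Letting $\epsilon\to 0$ yields $\norm{A}_2\geq \abs{a(x_0,\xi_0)}$, and taking the supremum over all $(x_0,\xi_0)$ with $\xi_0\neq 0$ completes the argument.

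\textbf{Main obstacle.} There is essentially no deep obstacle here — the heavy lifting was done in Proposition \ref{prop: mult op limit}, which packaged the oscillatory-testing computation. The only points requiring a little care are: making sure the bound is uniform enough that the supremum (rather than just a pointwise value) is controlled, which is handled by the degree-zero homogeneity in $\xi$ reducing to the compact set $\abs{\xi_0}=1$ and the compact $x$-support of $a$; and keeping track that $\norm{u_\lambda}_2$ does not degenerate as $\lambda\to\infty$, which is immediate since the modulating phase has absolute value one. So the proof is a short quantitative argument built directly on the previous proposition.
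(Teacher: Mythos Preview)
Your proof is correct and follows essentially the same approach as the paper: both localize near a point where $\abs{a}$ is (near) maximal and feed $A$ the oscillatory functions from Proposition~\ref{prop: mult op limit}, then use the reverse triangle inequality. The only cosmetic difference is that the paper packages the localization via a rescaled bump $v_k(x)=k^{n/2}\chi(kx)e^{i\lambda_k x\cdot\xi_0}$ rather than a fixed $f$ supported in a small ball; this sequence also satisfies $v_k\rightharpoonup 0$, a feature later reused in the proof of Lemma~\ref{lem: Gohberg}, but for the present proposition your version is equally valid.
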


\begin{proof}
	Without loss of generality, let $(0,\xi_0)$ be a point where $\abs{a}$ is maximized. Take $\chi\in C^\infty_c(\R^n)$ a bump function with $\chi(0)=1$ and $\norm{\chi}_2=1$, and set 
	\eq{
		v_k(x)=k^{n/2}\chi(kx)e^{i\lambda_kx\cdot \xi_0},
	}
	where $\lambda_k$ is chosen so that $\norm{Av_k-a(\cdot, \xi_0)v_k}_2<1/k$ by Proposition \ref{prop: mult op limit}. Note that $\norm{v_k}_2=1$ by construction and that
	\aeq{
		\norm{Av_k-a(0,\xi_0)v_k}_2&\leq \norm{Av_k-a(\cdot, \xi_0)v_k}_2\\
		&\qquad \qquad +\b{\sup_{x\in \tn{supp}(v_k)}\abs{a(x,\xi_0)-a(0,\xi_0)}}\norm{v_k}_2\\
		&\to 0
	}
	since the support of $v_k$ shrinks to $0$. It follows that
	\eq{
		\norm{A}_2=\norm{A}_2\norm{v_k}_2\geq \norm{Av_k}_2\to \norm{a(0,\xi_0) v_k}_2=\norm{a}_\infty \norm{v_k}_2=\norm{a}_\infty,
	}
	as desired.
\end{proof}

This establishes the desired bound for operators in $\Psi^0_\tn{cl}(\R^n)$ whose symbol has compact support in the spatial variable. To obtain Gohberg's lemma, we  generalize this to closed manifolds by a standard microlocal procedure.

\begin{proof}[Proof of Lemma \ref{lem: Gohberg}]
	Let $(x_0,\xi_0)\in M\times (T^*M\setminus\{0\})$ be a point where $\abs{\sigma_0(A)}$ is maximized, and take $U_1$ to be a coordinate chart centered at this point. Complete $\{U_1\}$ to a finite cover $M=\cup_jU_j$ by coordinate charts $F_j\colon U_j\to \R^n$, and let $\{\phi_j\}$ be a partition of unity subordinate to $\{U_j\}$ such that $\sum_j \phi_j^2=1$. Following
	%p.86 
	Chapter 18 of \cite{H85}, define
	\eq{
		B_ju=\phi_j F_j^*A_j(F_j^{-1})^*(\phi_ju),
	}
	where $A_j$ is the quantization of the pullback of $\sigma_0(A)$ to $T^*\R^n$ by $F_j^{-1}$. Set then $B=\sum B_j$, which will have principal symbol $\sigma_0(A)$. Note that $A-B$ is compact, so $\norm{A}_\tn{ess}=\norm{B}_\tn{ess}$ hence it sufficies to show the result for $B$.
	
	Observe that Proposition \ref{prop: mult op limit} holds for $B$ since although the operators $A_j$ do not have symbols with compact support, the symbols are still bounded in absolute value, and the $L^\infty$ bound can be converted to and $L^2$ bound as $M$ is assumed compact. Then, proceeding exactly as in the proof of Proposition \ref{prop: coordinate norm bound} within the $U_1$ coordinate chart, we obtain a sequence of functions $v_k$ such that
	\eq{
		\norm{Bv_k-a(x_0,\xi_0)v_k}_2\to 0.
	}
	The result follows after noting that $v_k\rightharpoonup 0$ weakly in $L^2$, hence $Kv_k\to 0$ for any compact $K$ thus, as earlier,
	\eq{
		\norm{A-K}_2\geq \norm{(A-K)v_k}_2\to \norm{a}_\infty,
	}
	thereby concluding the proof.
\end{proof}

To complete our discussion of Gohberg's lemma, we state the following easy consequence, which establishes a similar bound for classical pseudodifferential operators of any order.

\begin{cor}
\label{cor: Gohberg for any order}
	For $A\in \Psi^k_\tn{cl}(M)$ we have
	\eq{
		\norm{\,\abs{\xi}^{-k}\sigma_k(A)(x,\xi)}_\infty\leq \norm{A}_{H^k(M)\to L^2(M)}.
	}
\end{cor}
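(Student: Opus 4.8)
The plan is to reduce Corollary \ref{cor: Gohberg for any order} to Lemma \ref{lem: Gohberg} by factoring out the order. Given $A \in \Psi^k_{\tn{cl}}(M)$, consider the operator $\tilde A \coloneqq A (1+\Delta_M)^{-k/2}$. Since $(1+\Delta_M)^{-k/2}$ is an elliptic classical pseudodifferential operator of order $-k$ with principal symbol $\abs{\xi}^{-k}$, the composition $\tilde A$ lies in $\Psi^0_{\tn{cl}}(M)$, and by the symbol calculus its principal symbol is $\sigma_0(\tilde A)(x,\xi) = \sigma_k(A)(x,\xi)\cdot \abs{\xi}^{-k}$. Applying Lemma \ref{lem: Gohberg} to $\tilde A$ gives $\norm{\,\abs{\xi}^{-k}\sigma_k(A)(x,\xi)}_\infty = \norm{\sigma_0(\tilde A)}_\infty \leq \norm{\tilde A}_{\tn{ess}} \leq \norm{\tilde A}_2$.

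It then remains to observe that $\norm{\tilde A}_2 = \norm{A(1+\Delta_M)^{-k/2}}_2 \leq \norm{A}_{H^k(M)\to L^2(M)}$. This is immediate from the definition of the $H^k$ norm adopted in the introduction: the map $(1+\Delta_M)^{-k/2}\colon L^2(M)\to H^k(M)$ is an isometry (indeed $\norm{(1+\Delta_M)^{-k/2}f}_{H^k} = \norm{(1+\Delta_M)^{k/2}(1+\Delta_M)^{-k/2}f}_2 = \norm{f}_2$), so for $f \in L^2(M)$ we have $\norm{\tilde A f}_2 = \norm{A\big((1+\Delta_M)^{-k/2}f\big)}_2 \leq \norm{A}_{H^k\to L^2}\norm{(1+\Delta_M)^{-k/2}f}_{H^k} = \norm{A}_{H^k\to L^2}\norm{f}_2$. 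Taking the supremum over unit-norm $f$ yields the claim. (One should note that for $k < 0$ the same argument works verbatim, reading $\norm{A}_{H^k\to L^2}$ as the operator norm on the appropriate Sobolev scale.)

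The only genuine point requiring care — and the one I would flag as the main obstacle — is the symbol composition step: one must know that $(1+\Delta_M)^{-k/2}$ is a classical (one-step polyhomogeneous) pseudodifferential operator of order $-k$ with the stated principal symbol, so that Lemma \ref{lem: Gohberg}, which was proved for $\Psi^0_{\tn{cl}}$, genuinely applies to $\tilde A$. This is a standard fact (the complex powers of a positive elliptic operator are classical pseudodifferential operators, via the Seeley construction, and their principal symbol is the corresponding power of the principal symbol of $1+\Delta_M$, namely $\abs{\xi}^2$), but it is the one external input beyond the self-contained material of the appendix. With that in hand the corollary follows in a couple of lines, and I would present it as such.
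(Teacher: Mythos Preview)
Your proof is correct and follows essentially the same route as the paper: compose with $(1+\Delta_M)^{-k/2}$ to reduce to order zero, apply Gohberg's lemma, use multiplicativity of principal symbols on the left, and use that $(1+\Delta_M)^{-k/2}$ is an $L^2\to H^k$ isometry on the right. Your additional remark that the classicality of $(1+\Delta_M)^{-k/2}$ (via Seeley) is the one nontrivial external input is a fair observation, though the paper simply takes this for granted.
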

\begin{proof}
	The operator $A(\Delta+1)^{-k/2}$ is a classical pseudodifferential operator of order zero, and Lemma \ref{lem: Gohberg} thus states that
	\eq{
		\norm{\sigma_0\b{A(\Delta+1)^{-k/2}}}_\infty \leq \norm{A(\Delta+1)^{-k/2}}_\tn{ess}.
	}
	The result follows after noting that
	\eq{
		\sigma_0\b{A(\Delta+1)^{-k/2}}=\sigma_k(A)\sigma_{-k}\b{(\Delta+1)^{-k/2}}=\abs{\xi}^{-k}\sigma_k(A)
	}
	since principal symbols are multiplicative, and that
	\eq{
		\norm{A(\Delta+1)^{-k/2}}_\tn{ess}\leq \norm{A(\Delta+1)^{-k/2}}_2\leq \norm{A}_{H^k(M)\to L^2(M)}
	}
	since $(\Delta+1)^{-k/2}$ is definitionally as isometry from $L^2(M)$ to $H^k(M)$.
\end{proof}

\newpage

\bibliography{bib.bib}{}
\bibliographystyle{plain}

\end{document}